\documentclass[a4paper, 12pt]{amsart}
\setlength{\textheight}{23cm}\setlength{\textwidth}{16cm}\setlength{\oddsidemargin}{0cm}\setlength{\evensidemargin}{0cm}\setlength{\topmargin}{0cm}
\usepackage{amssymb, amsmath, amsthm, enumerate, color, subcaption, tipa, upgreek, xcolor}
\usepackage{amstext, tikz, appendix, mathrsfs, mathabx, mathtools, pgfplots, stmaryrd}
\usepackage{csquotes}
\usepackage[colorlinks=true, breaklinks=true, linkcolor=black, citecolor=blue, urlcolor=red]{hyperref} 
\usepackage[english]{babel}
\usepackage{forest, adjustbox}
\usepackage{tikz-cd}
\usepackage{enumitem}

\numberwithin{equation}{section}
\setcounter{tocdepth}{3}
\setlength\parindent{0pt}

\newtheorem{letterthm}{Theorem}

\newtheorem{lettercor}[letterthm]{Corollary}

\newtheorem{theorem}{Theorem}[section]
\newtheorem{lemma}[theorem]{Lemma}
\newtheorem{corollary}[theorem]{Corollary}
\newtheorem{proposition}[theorem]{Proposition}
\newtheorem{observation}[theorem]{Observation}

\newtheorem{definition}[theorem]{Definition}

\newtheorem{remark}[theorem]{Remark}
\newtheorem{example}[theorem]{Example}

\newcommand{\act}{\curvearrowright}

\newcommand{\cC}{\mathcal C}
\newcommand{\C}{\mathbf C}

\DeclareMathOperator{\Caret}{Caret}

\DeclareMathOperator{\Comp}{Comp}

\DeclareMathOperator{\End}{End}
\newcommand{\cF}{\mathcal F}

\DeclareMathOperator{\Func}{Func}

\newcommand{\Ga}{\Gamma}
\DeclareMathOperator{\Hom}{Hom}
\DeclareMathOperator{\id}{id}
\DeclareMathOperator{\Irr}{Irr}
\newcommand{\into}{\hookrightarrow}

\newcommand{\bk}{\mathbf k}

\DeclareMathOperator{\Leaf}{Leaf}

\newcommand{\N}{\mathbf{N}}
\DeclareMathOperator{\nd}{{nd}}
\newcommand{\cO}{\mathcal O}
\newcommand{\ot}{\otimes}

\newcommand{\cP}{\mathcal P}

\newcommand{\R}{\mathbf{R}}

\DeclareMathOperator{\Rep}{Rep}

\DeclareMathOperator{\full}{{full}}

\newcommand{\cS}{\mathcal S}

\newcommand{\cT}{\mathcal T}

\newcommand{\ti}{\tilde}

\newcommand{\cV}{\mathcal V}

\DeclareMathOperator{\Vect}{Vec}
\newcommand{\Y}{\wedge}
\newcommand{\Z}{\mathbf{Z}}
\newcommand{\onto}{\twoheadrightarrow}
\DeclarePairedDelimiterX{\norm}[1]{\lVert}{\rVert}{#1}

\begin{document}

	\title{Moduli of representations of Leavitt path algebras}
	\thanks{
		AB is supported by the Australian Research Council Grant DP200100067.\\
		DW is supported by an Australian Government Research Training Program (RTP) Scholarship.}
	\author{Arnaud Brothier and Dilshan Wijesena}
\address{Arnaud Brothier\\	University of Trieste, Department of Mathematics, via Valerio 12/1, 34127, Trieste, Italy and School of Mathematics and Statistics, University of New South Wales, Sydney NSW 2052, Australia}
\email{arnaud.brothier@gmail.com\endgraf
		\url{https://sites.google.com/site/arnaudbrothier/}}
	\address{Dilshan Wijesena\\ School of Mathematics and Statistics, University of New South Wales, Sydney NSW 2052, Australia}
	\email{dilshan.wijesena@hotmail.com}
	\maketitle
	

\begin{abstract}
We transpose Jones' technology and the authors' $C^*$-algebraic techniques to study representations of the Leavitt path algebra $L$ (over an arbitrary row-finite graph) by using its quiver algebra $A$.
We establish an equivalence of categories between certain full subcategories of $\Rep(A)$ and $\Rep(L)$ that preserves irreducibility and indecomposability.
We define a dimension function on $\Rep(L)$, and for each finite dimension we provide a moduli space for the irreducible classes by transporting structures of King and Nakajima on quiver representations.
Our techniques are both explicit and functorial.
\end{abstract}

\section*{Introduction}
Vaughan Jones has revealed fascinating connections between Richard Thompson's groups $F\subset T\subset V$ and subfactor theory while trying to reconstruct conformal field theories \cite{Jones17,Brothier20}.
This led to Jones' technology\,---\,a powerful and explicit method for constructing group actions \cite{Jones18}, see also \cite[Section 2]{Brothier23}.
It was then observed that a certain class of unitary representations of $V$ are in one-to-one correspondence with the representations of the so-called Pythagorean $C^*$-algebra $\cP_2$ \cite{Brothier-Jones19}.
Additionally, all these representations extend to the Cuntz(--Dixmier) $C^*$-algebra $\cO_2$ (for the usual Birget--Nekrashevych embedding $V\into \cO_2$), and moreover all representations of $\cO_2$ arise in this way \cite{Dixmier64,Cuntz77,Birget04,Nekrashevych04}.
From this, the authors have forged powerful tools for studying representations of $\cO_2$ via representations of $\cP_2$ obtaining, as a complete surprise, moduli spaces of representations of $\cO_2$ \cite{Brothier-Wijesena22,Brothier-Wijesena24a,Brothier-Wijesena24b}.

This article studies representations of the celebrated Leavitt algebras \cite{Leavitt56} and their generalisation as the Leavitt \emph{path} algebras \cite{Abrams-Pino05,Ara-Moreno-Pardo07}
for which, until this article, the representation theory has remained very obscure and in ``an infant stage'' \cite{Rangaswamy20}.
We adopt a similar strategy as above where $\cP_2$ and $\cO_2$ are now replaced by the quiver algebra and the Leavitt path algebra, respectively, and complex representations of $C^*$-algebras are replaced by representations of algebras over arbitrary fields. 
Techniques have been appreciably changed to adapt to this algebraic and non-semisimple situation.

We now explain our approach and main results. We start by doing this for our ``base case" - the Leavitt algebras. Then, we explain how it readily generalises to Leavitt \emph{path} algebras over arbitrary \emph{row-finite} graphs $\Gamma$. 
We finish by confronting our results to previous ones in the literature.
The core of this article follows a similar presentation.

Fix $n\geq 2$, a ground field $\bk$, and consider the free algebra $A$ over $n$ generators and $L$ the Leavitt algebras with $2n$ generators. 
They are, respectively, the quiver algebra and the Leavitt path algebra of a bouquet of $n$ loops.
Given a representation or module $V\in \Rep(A)$ of $A$ (we will use both terms interchangeably) we form, via a direct limit, a new vector space $\Pi(V)$ with an $L$-module structure on it. 
This procedure is inspired by Jones' technology (see Remark \ref{rem:Jones-technology}) and is both explicit and functorial yielding $\Pi:\Rep(A)\to\Rep(L).$
There is an obvious algebra morphism $\iota:A\to L$ yielding a functor $\Lambda:\Rep(L)\to \Rep(A)$.
We prove $\Pi\circ \Lambda$ is naturally isomorphic to the identity functor. 
In particular, \emph{all} $L$-module can be constructed by an $A$-module using $\Pi$.

We then investigate when the restriction of $\Pi$ is fully faithful.
First, we define \emph{complete} submodules $V_c\subset V$ - those inducing an isomorphism $\Pi(V_c)\simeq \Pi(V)$.
Second, we say that $V$ is in the class $\cS$ when it admits a \emph{smallest} complete submodule $V_s\subset V$. 
Class $\cS$ is rather large and contains all finite-dimensional $A$-modules.
The process $\Sigma:V\mapsto V_s$ is functorial on $\cS$.
An $A$-module $V$ is \emph{full} when $V_s=V$.
Third, there is a canonical $A$-module map $j_V:V\to \Lambda\circ \Pi(V)$, and we say that $V$ is \emph{nondegenerate} when $j_V$ is injective. We observe that $\nabla:V\mapsto V/\ker(j_V)$ is functorial, produces nondegenerate modules, and commutes with $\Sigma$.
Our main theorem shows that $\Pi$ 
is fully faithful when restricted to nondegenerate full $A$-modules having $\Sigma\circ\Lambda$ as a weak inverse.

\begin{letterthm}\label{letterthm:main}(Theorem \ref{theo:main})
Let $\Rep_{\full}^{\nd}(A)$ be the category of full and nondegenerate representations of $A$ and let $\Rep_{\cS}(L)$ be the full subcategory of $\Rep(L)$ with class of objects $\{\Pi(V):\ V\in\cS\}$.
The functors
$$\Pi_{\full}^{\nd}:\Rep_{\full}^{\nd}(A)\to \Rep_{\cS}(L) \text{ and } \Sigma\circ\Lambda_\cS:\Rep_{\cS}(L)\to \Rep_{\full}^{\nd}(A)$$
define an equivalence of categories. 
\end{letterthm}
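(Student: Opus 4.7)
The plan is to build the quasi-inverse pair by establishing two natural isomorphisms $\Pi_{\full}^{\nd}\circ\Sigma\circ\Lambda_\cS\cong\id_{\Rep_\cS(L)}$ and $\Sigma\circ\Lambda_\cS\circ\Pi_{\full}^{\nd}\cong\id_{\Rep_{\full}^{\nd}(A)}$. Both transformations will be assembled from the two ingredients already on hand: the canonical $A$-module map $j_V:V\to\Lambda\Pi V$, and the natural isomorphism $\Pi\circ\Lambda\cong\id$ noted in the introduction.

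The first step is to verify that the two functors land in the claimed categories. A full module is by definition in $\cS$, so $\Pi_{\full}^{\nd}(V)=\Pi(V)$ lies in $\Rep_\cS(L)$ tautologically. Conversely, given $W\in\Rep_\cS(L)$, write $W\cong\Pi(V_0)$ with $V_0\in\cS$; then $j_{V_0}:V_0\to\Lambda W$ has complete image, because by the triangle identity relating $j$ to $\Pi\Lambda\cong\id$ the map $\Pi(j_{V_0})$ is inverse to the counit at $\Pi V_0$, hence an isomorphism. From this (together with $V_0\in\cS$) one expects $\Lambda W\in\cS$, so that $\Sigma\Lambda W$ makes sense. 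Being a smallest complete submodule, $\Sigma\Lambda W$ is itself full, using that completeness composes (a complete submodule of a complete submodule is complete, by transitivity of the property ``$\Pi$-is-an-iso''). Nondegeneracy of $\Sigma\Lambda W$ then follows from naturality of $j$: the map $j_{\Sigma\Lambda W}$ is forced to coincide with the inclusion $\Sigma\Lambda W\into\Lambda W\cong\Lambda\Pi\Sigma\Lambda W$, and is therefore injective.

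The easier triangle is $\Pi\circ\Sigma\circ\Lambda_\cS\cong\id$ on $\Rep_\cS(L)$. For $W$ in this category, the inclusion $\Sigma\Lambda W\into\Lambda W$ is complete by definition, so applying $\Pi$ yields an isomorphism $\Pi\Sigma\Lambda W\xrightarrow{\sim}\Pi\Lambda W\xrightarrow{\sim}W$, the last step being supplied by $\Pi\Lambda\cong\id$. Naturality in $W$ is then a formal consequence of the functoriality of $\Pi,\Sigma,\Lambda$ together with the naturality of $\Pi\Lambda\cong\id$.

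The main obstacle is the other triangle: for full and nondegenerate $V$, the canonical map $j_V$ must restrict to an isomorphism $V\xrightarrow{\sim}\Sigma\Lambda\Pi V$. Injectivity of $j_V$ is precisely the nondegeneracy hypothesis. The inclusion $\Sigma\Lambda\Pi V\subset j_V(V)$ is free, since $j_V(V)$ is a complete submodule of $\Lambda\Pi V$ (by the triangle identity again) and $\Sigma$ returns the \emph{smallest} such. The delicate content is the reverse inclusion $j_V(V)\subset\Sigma\Lambda\Pi V$. The plan is to show that the preimage $j_V^{-1}(\Sigma\Lambda\Pi V)\subset V$ is itself a complete $A$-submodule of $V$; fullness of $V$ will then force this preimage to equal $V$, proving $j_V(V)\subset\Sigma\Lambda\Pi V$. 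Transporting completeness \emph{backwards} across $j_V$ is the technical heart of the argument: the expectation is to exploit the explicit direct-limit description of $\Pi$, the fact that both $j_V(V)$ and $\Sigma\Lambda\Pi V$ map isomorphically onto $\Pi V$ under $\Pi$, and the compatibility $\Sigma\circ\nabla=\nabla\circ\Sigma$ recalled in the introduction. Once this inclusion is secured, naturality in $V$ of the resulting isomorphism $V\cong\Sigma\Lambda\Pi V$ is automatic from naturality of $j$, completing the equivalence of categories.
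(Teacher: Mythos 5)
Your proposal follows essentially the same path as the paper's proof: check both functors are well-defined, then establish the two triangle isomorphisms. Two remarks on where your ``plans'' meet the actual details.

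First, the claim that $\Lambda W\in\cS$ when $W\simeq\Pi(V_0)$ with $V_0\in\cS$, which you flag with ``one expects,'' is genuinely needed and does require an argument; it is exactly Proposition~\ref{prop:class-S}(4) of the paper, whose proof chains together: $j_{V_0}(V_0)\subset\Lambda\Pi V_0$ is complete, $j_{V_0}(V_{0,s})\subset j_{V_0}(V_0)$ is the smallest complete submodule (pushing forward the smallest one along the $A$-morphism $j_{V_0}$), and then the fact that if $U\subset U'$ is complete with $U\in\cS$ then $U'\in\cS$ with the same smallest complete submodule. You'd want to spell this out rather than leave it as an expectation.

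Second, the step you single out as the ``technical heart'' --- transporting completeness backwards across $j_V$ to show $j_V^{-1}(\Sigma\Lambda\Pi V)\subset V$ is complete --- is in fact much lighter than you anticipate. Preimages of complete submodules under any $A$-module map are always complete, by a one-line check straight from Definition~\ref{def:complete-subrep}: if $\theta(v)\cdot p\in V_c'$ for $|p|\geq k$ then $v\cdot p\in\theta^{-1}(V_c')$. You do not need the direct-limit description of $\Pi$ nor the $\Sigma\circ\nabla=\nabla\circ\Sigma$ compatibility for this inclusion; the elementary lemma (the paper's Proposition~\ref{prop:complete-rep}(1)) suffices, and combined with fullness of $V$ it immediately gives $j_V(V)\subset\Sigma\Lambda\Pi V$ as you want. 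With these two points filled in, your argument is correct and coincides in substance with the paper's (your version of the ``easy'' triangle, via completeness of $\Sigma\Lambda W\hookrightarrow\Lambda W$ plus $\Pi\Lambda\simeq\id$, is if anything slightly cleaner than the paper's detour through the quotient $V_s/(\ker j_V\cap V_s)$).
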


The key of the proof is to show that the $L$-module $\Pi(V)$ does remember the $A$-submodule $j_V(V_s)$ when $V\in \cS$. This was a great surprise and the main conceptual result of our previous work in the context of $C^*$-algebras \cite{Brothier-Wijesena24b}.
In this algebraic context the proof is much more elementary and reduces to showing that $\cS$ is closed under $\Lambda\circ\Pi$.

In this non-semisimple context we cannot deduce from Theorem \ref{letterthm:main} that $\Pi$ preserves irreducibility nor indecomposability. 
Nevertheless, the following result does permit to efficiently construct and classify irreducible and indecomposable representations of $L$.

\begin{letterthm}\label{letterthm:irred}(Theorem \ref{theo:irred})
If $V$ is an irreducible (resp.~indecomposable nondegenerate and full) representation of $A$, then $\Pi(V)$ is an irreducible (resp.~indecomposable) representation of $L$.
Moreover, if $V,V'\in \Rep(A)$ are irreducible and nonzero (resp.~indecomposable nondegenerate and full), then $V\simeq V'$ if and only if $\Pi(V)\simeq \Pi(V')$.
\end{letterthm}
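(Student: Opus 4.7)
The plan is to split the theorem into preservation of the two properties and the iff statements, attacking the irreducibility part by the explicit Jones-style construction of $\Pi$ and using the categorical equivalence of Theorem \ref{letterthm:main} for the rest.

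For the irreducibility preservation, the first step is to reduce to $V$ full and nondegenerate: since $\ker(j_V)$ and $V_s$ are both $A$-submodules of the irreducible module $V$, each equals $0$ or $V$; the ``bad'' choice makes $\Pi(V)$ vanish (which is excluded from being an irreducible $L$-module), hence $V$ is nondegenerate and full, and $j_V$ embeds $V$ as an $A$-submodule of $\Pi(V)$ that $L$-generates the whole space. Now given a nonzero $L$-submodule $W\subseteq\Pi(V)$, I would pick a nonzero $w\in W$, put it in the canonical direct-limit form $w=\sum_{|u|=N}s_u\cdot j_V(v_u)$ at some depth $N$ with at least one $v_{u_0}\neq 0$, and act on the left by $s_{u_0}^{*}\in L$: the Leavitt relation $s_u^{*}s_{u'}=\delta_{u,u'}$ (for $|u|=|u'|$) then forces $s_{u_0}^{*}\cdot w = j_V(v_{u_0})\in W\cap j_V(V)$. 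Since $V\simeq j_V(V)$ is simple, $W\cap j_V(V)=j_V(V)$; as $j_V(V)$ generates $\Pi(V)$ as an $L$-module, $W=\Pi(V)$.

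For the indecomposability preservation and for both iff directions, I would invoke the categorical equivalence. If $V\in\Rep_{\full}^{\nd}(A)$ is indecomposable, then $\Pi(V)\in\Rep_{\cS}(L)$, and because both $\Rep_{\full}^{\nd}(A)\subseteq\Rep(A)$ and $\Rep_{\cS}(L)\subseteq\Rep(L)$ are \emph{full} subcategories, the intrinsic and ambient $\End$-rings coincide; Theorem \ref{letterthm:main} therefore supplies a ring isomorphism $\End_A(V)\simeq\End_L(\Pi(V))$, which transports the absence of nontrivial idempotents, so $\Pi(V)$ is indecomposable. The forward direction of the iff is just functoriality of $\Pi$; for the converse, in both regimes one has $V,V'\in\Rep_{\full}^{\nd}(A)$ (after the preliminary reduction in the irreducible case), any $L$-module isomorphism $\Pi(V)\simeq\Pi(V')$ is automatically an isomorphism in the full subcategory $\Rep_{\cS}(L)$, and applying the quasi-inverse $\Sigma\Lambda$ from Theorem \ref{letterthm:main} yields $V\simeq\Sigma\Lambda\Pi(V)\simeq\Sigma\Lambda\Pi(V')\simeq V'$.

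The main obstacle is the irreducibility statement: the categorical equivalence is insufficient there because a generic $L$-submodule of $\Pi(V)$ need not lie in $\Rep_{\cS}(L)$, so one cannot simply transport submodules along the equivalence. The explicit canonical form afforded by Jones' technology is what allows one to ``pull back'' inside $j_V(V)$ and invoke the simplicity of $V$. In the Leavitt path algebra generalisation to a row-finite graph $\Gamma$, words $u$ of length $N$ are replaced by paths of length $N$ in $\Gamma$, but the argument is otherwise identical.
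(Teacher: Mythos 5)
Your proposal is correct, and two of its three pieces track the paper's own argument, while the indecomposability piece is a genuinely different (and cleaner, once Theorem~\ref{theo:main} is available) route. For irreducibility your mechanism is the same as the paper's: pick a nonzero element of the submodule, write it in canonical form at some depth, and act by a path to land a nonzero $j_V(v_{u_0})$ inside $W_0\cap j_V(V)$. The paper packages this as Lemma~\ref{lem:L-submodule}, stated \emph{without} your preliminary reduction to $V$ full and nondegenerate; instead it selects a leaf whose decoration survives every $\Phi(s)$, so the lemma also applies when $V$ is degenerate (a minor generality gain, used again for indecomposability). Your reduction step is essentially Proposition~\ref{prop:degenerate-rep}. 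The ``if and only if'' is handled the same way in both. For indecomposability you transfer idempotents along the ring isomorphism $\End_A(V)\simeq\End_L(\Pi(V))$ supplied by full faithfulness of $\Pi_{\full}^{\nd}$, whereas the paper argues explicitly: given $\Pi(V)=W_1\oplus W_2$ it sets $V_i:=j_V(V)\cap W_i$, shows each $V_i$ generates $W_i$ over $L$, uses fullness of $V$ to deduce $V=V_1\oplus V_2$, concludes one $V_i$ vanishes by indecomposability, and finishes with Lemma~\ref{lem:L-submodule}. Your version is more conceptual and requires nothing beyond Theorem~\ref{theo:main}; the paper's is more elementary and makes visible the fact that $L$-decompositions of $\Pi(V)$ restrict to $A$-decompositions of $V$. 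You are also right that irreducibility cannot simply be delegated to the equivalence, since a sub-$L$-module of $\Pi(V)$ need not lie in $\Rep_\cS(L)$. One cosmetic caveat: the paper's convention makes the zero module vacuously irreducible, so the degenerate case $\Pi(V)=\{0\}$ is folded into the first claim rather than excluded as your phrasing suggests.
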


We then introduce a novel and easily computable invariant:
the \emph{$A$-dimension} $\dim_A$ which is defined for $A$-modules and $L$-modules. 
The $A$-dimension is invariant under all the functors we have defined: $\Pi,\Lambda,\nabla, \Sigma$. 
Hence, given $W\in\Rep_{\cS}(L)$ and taking any $V\in\Rep(A)$ satisfying $\Pi(V)\simeq W$, we obtain that $\dim_A(W)$ is equal to the usual dimension of $j_V(V_s)$. 
Observe, \emph{all} representations of $L$ have infinite \emph{usual} dimension while many of them have finite $A$-dimension, see below.

Consider $\Irr(L)_d$ - the irreducible classes of representations of $L$ of $A$-dimension $d$. By our main theorem, $\Irr(L)_1$ is in bijection with the punctured vector space $\bk^{n}\setminus\{0\}$, while for all finite $d\neq 1$ we get that $\Irr(L)_d$ is in bijection with the irreducible classes of representations of $A$ of \emph{usual} dimension $d$.
Now, there are famous geometric structures for classes of representations of $A$: Nakajima's quiver varieties when $\bk=\C$ and King's moduli spaces for algebraically closed $\bk$ of characteristic $0$ \cite{Nakajima94,King94}. 
From this we deduce moduli spaces of representations of $L$.

\begin{lettercor}\label{lettercor}(Theorem \ref{theo:moduli-space})
Assume that $\bk$ is algebraically closed with characteristic $0$.
For finite $d\geq 1$ the set $\Irr(L)_d$ of irreducible classes of representations of $L$ of $A$-dimension $d$ is in bijection with a smooth variety of dimension $(n-1)d^2+1$.
\end{lettercor}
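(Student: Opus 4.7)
The plan is to transport the moduli spaces for representations of $A$ produced by King and Nakajima to $L$ via the functor $\Pi$, using that Theorem \ref{letterthm:irred} turns the classification of irreducible $L$-modules into one for irreducible $A$-modules.

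First, I would assemble the set-level bijection. By Theorem \ref{letterthm:irred}, $\Pi$ sends nonzero irreducible $A$-modules to irreducible $L$-modules and descends to an injection on isomorphism classes. Combined with $\Pi\circ\Lambda\simeq\id$, so that every $L$-module lies in the image of $\Pi$, this yields a bijection between $\Irr(L)_d$ and the set of isomorphism classes of nonzero irreducible $A$-modules $V$ with $\dim_A(\Pi(V))=d$. Since $\dim_A$ is preserved by $\Pi$ and agrees on $j_V(V_s)$ with the usual dimension, for $d\geq 2$ the latter set is exactly $\Irr(A)_d$, the set of isomorphism classes of irreducible $d$-dimensional $A$-modules.

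Next, I would identify the geometric moduli. Since $A$ is the path algebra of the bouquet of $n$ loops, a $d$-dimensional $A$-module is simply an $n$-tuple of $d\times d$ matrices, giving the representation variety $R_d=\mathrm{Mat}_d(\bk)^n\simeq\bk^{nd^2}$ with $GL_d(\bk)$ acting by simultaneous conjugation. King's construction \cite{King94} (with trivial stability condition) produces the GIT quotient, whose stable locus $R_d^{\mathrm{sim}}/GL_d$ parameterises the simple representations; under the hypothesis that $\bk$ is algebraically closed of characteristic $0$, this is a smooth quasi-projective variety, also accessible via Nakajima quiver varieties \cite{Nakajima94}. Since $GL_d$ acts on $R_d^{\mathrm{sim}}$ with stabiliser equal to the centre $\bk^{\times}\cdot\mathrm{Id}$, the effective group is $PGL_d$, hence
$$\dim\bigl(R_d^{\mathrm{sim}}/GL_d\bigr)=nd^2-(d^2-1)=(n-1)d^2+1.$$

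Finally, I would treat the case $d=1$ separately: a 1-dimensional $A$-module is determined by the scalar $n$-tuple by which the generators act, and $\Irr(L)_1$ corresponds to $\bk^n\setminus\{0\}$ (the origin is excluded because the zero-action module has vanishing $A$-dimension under $\Pi$). This is a smooth affine variety of dimension $n=(n-1)\cdot 1^2+1$, matching the formula. The main subtlety I expect is the bookkeeping of the bijection: verifying that nonzero irreducible $A$-modules are automatically full and nondegenerate so that Theorem \ref{letterthm:main} applies, and confirming that $\dim_A$ behaves as claimed on the simple locus. This is where the earlier theorems are cashed in, while the quiver-theoretic input from King and Nakajima is applied essentially as a black box.
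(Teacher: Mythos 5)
Your proposal is correct and follows essentially the same route as the paper: reduce $\Irr(L)_d$ to $\Irr(A)_d$ via the functor $\Pi$ and the preceding equivalence/irreducibility theorems (with the one degenerate $1$-dimensional module removed when $d=1$), then invoke King's GIT quotient of the representation variety $M_d(\bk)^n$ by the free action of $PGL_d$ on the stable (= simple) locus to obtain a smooth variety of dimension $nd^2-(d^2-1)=(n-1)d^2+1$. The bookkeeping you flag at the end — that irreducible $A$-modules of dimension $\geq 2$ are automatically full and nondegenerate and that $\dim_A$ then coincides with the usual dimension — is exactly what the paper checks via Proposition \ref{prop:degenerate-rep} and Theorem \ref{thm:l-dim}.
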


Observe that our base case is very far from satisfying the assumptions of Gabriel or Kac's theorems\,---\,$A$ has many representations (and so does $L$) \cite{Gabriel72,Kac80}. 
The irreducible classes of Chen modules and their generalisations of $A$-dimension $d$ forms a tiny piece of $\Irr(L)_d$: the (generalised) Chen modules form a one-dimensional sub-variety when $\bk$ is algebraically closed with characteristic $0$; and for arbitrary $\bk$ they are parameterised by at most $d$ coefficients out of $(n-1)d^2+1$ for $\Irr(L)_d$, see Section \ref{sec:Chen-module-few}. 

We now explain how to generalise these results to Leavitt \emph{path} algebras.
Consider \emph{any row-finite} directed graph $\Gamma$ and construct its associated quiver algebra $A$ and Leavitt path algebra $L$. 
In particular, $\Ga$ can be taken to be any locally finite graph.
The whole strategy and main results continue to hold: we can define complete, full, nondegenerate $A$-modules, the class $\cS$, the functors $\Pi,\Lambda,\Sigma,\nabla$, the $A$-dimension for representations of $A$ and $L$, and deduce an equivalence of categories via a restriction of $\Pi$ that preserves irreducibility and indecomposability.
Only now, most objects are decomposed over the vertex set $E^0$ of $\Ga$, including the $A$-dimension function.
Once our machinery is put into place we may use geometric invariant theory applied to quiver representations to deduce moduli spaces of representations of $L$ \cite{Mumford-Fogarty-Kirwan94,Nakajima94,King94}.

\begin{letterthm}\label{letterthm:main-graph}
(Section \ref{sec:path-algebra})
Let $\Ga$ be a \emph{row-finite} directed graph and let $A$ and $L$ be the associated quiver algebra and Leavitt path algebra over a field $\bk$.
\begin{enumerate}
\item The functors $\Pi^{\nd}_{\full}:\Rep^{\nd}_{\full}(A)\to \Rep_\cS(L)$ and $\Sigma\circ\Lambda_\cS:\Rep_\cS(L)\to\Rep_{\full}^{\nd}(A)$ define an equivalence of categories.
\item If $V\in \Rep(A)$ is irreducible (resp. indecomposable nondegenerate and full), then $\Pi(V)\in \Rep(L)$ is irreducible (resp. indecomposable).
Moreover, if $V,V' \in \Rep(A)$ are irreducible and nondegenerate (resp. indecomposable nondegenerate and full), then $V \simeq V'$ if and only if $\Pi(V) \simeq \Pi(V')$.
\item Assume that $\bk$ is algebraically closed of characteristic $0$.
The space $\Irr(L)_d$ of irreducible classes of representations of $L$ of fixed $A$-dimension $d=(d_\nu)_{\nu\in E^0}$ with $\sum_\nu d_\nu$ finite is either empty or in bijection with a smooth variety of dimension
\[\sum_{e \in E^1} d_{se}d_{re} - \sum_{\nu \in E^0} d_\nu^2 +1.\]
\end{enumerate}
\end{letterthm}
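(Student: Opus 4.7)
The plan is to re-run the entire strategy from the bouquet-of-loops base case (Theorems \ref{letterthm:main}, \ref{letterthm:irred} and Corollary \ref{lettercor}) in the setting of an arbitrary row-finite directed graph $\Ga = (E^0, E^1)$, exploiting that all essential objects decompose along the vertex set $E^0$. I would start by defining the quiver algebra $A$ and Leavitt path algebra $L$ attached to $\Ga$ and noting that each $A$- or $L$-module $V$ decomposes as $V = \bigoplus_{\nu \in E^0} V_\nu$ via the orthogonal vertex idempotents; the generators $x_e$ for $e \in E^1$ then act as linear maps $V_{se} \to V_{re}$, while the star generators of $L$ move mass in the opposite direction. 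This yields a vector-valued $A$-dimension $\dim_A V = (\dim V_\nu)_{\nu \in E^0}$ in place of the single integer used for the bouquet.

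Next, I would build $\Pi(V)$ as a direct limit indexed by the poset of finite ``complete prefix subforests'' of paths in $\Ga$ rooted at each vertex. Row-finiteness of $\Ga$ ensures that the transition maps in the direct system are well-defined and that the colimit is a legitimate $L$-module; everything remains $E^0$-graded. All the companion notions\,---\,complete submodule, full, nondegenerate, the class $\cS$, and the functors $\Lambda, \Sigma, \nabla$\,---\,transfer verbatim, with each definition and statement interpreted component-wise along $E^0$. The proofs of Theorems \ref{letterthm:main} and \ref{letterthm:irred} are essentially local at each vertex and thus transport through with only bookkeeping changes; in particular, the conceptual core\,---\,closure of $\cS$ under $\Lambda\circ\Pi$\,---\,remains the elementary verification used in the bouquet case. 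This delivers parts (1) and (2).

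For part (3), via the equivalence in (1) and the irreducibility preservation in (2), the set $\Irr(L)_d$ (outside the exceptional low-dimensional cases treated separately in the base case) is in bijection with the isomorphism classes of simple representations of the quiver $\Ga$ of dimension vector $d$. The finiteness assumption $\sum_\nu d_\nu < \infty$ is precisely what turns the representation space of $\Ga$ into a finite-dimensional affine variety $R(\Ga, d) = \bigoplus_{e \in E^1} \Hom(\bk^{d_{se}}, \bk^{d_{re}})$ of dimension $\sum_e d_{se} d_{re}$, acted upon by the reductive group $G(d) = \prod_{\nu \in E^0} \mathrm{GL}(d_\nu)$ of dimension $\sum_\nu d_\nu^2$. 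Applying King's theorem with the trivial character, the simple locus forms a smooth open subscheme on which $G(d)$ acts freely modulo the central diagonal $\bk^\ast$, so either it is empty or the GIT quotient is a smooth variety of dimension $\sum_e d_{se} d_{re} - \sum_\nu d_\nu^2 + 1$, matching the formula $1 - \langle d, d\rangle$ for the Euler form of $\Ga$.

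The main obstacle I expect is logistical rather than conceptual: ensuring that every step of the bouquet proofs interacts cleanly with the $E^0$-decomposition when $E^0$ or $E^1$ is infinite, and that the direct-limit colimit defining $\Pi$ correctly tracks the source/range data of $\Ga$. Care is also needed for vertex-level boundary cases\,---\,sinks (no outgoing edges), sources, and dimension vectors supported on a single vertex\,---\,which may force separate treatment, just as in the base case the $d=1$ classes had to be described as a punctured affine space rather than through quiver irreducibles.
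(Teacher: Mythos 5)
Your proposal follows essentially the same route as the paper: re-run the bouquet-of-loops machinery ($\Pi$, $\Lambda$, $\Sigma$, $\nabla$, completeness, fullness, nondegeneracy, $A$-dimension) with everything decomposed along $E^0$, using row-finiteness to keep the caret sums finite, and then apply King's GIT construction to the quiver representation space $R_d = \bigoplus_e \Hom(\bk^{d_{se}},\bk^{d_{re}})$ with $PG_d = \bigl(\prod_\nu GL(d_\nu)\bigr)/\bk^\times$ to get the smooth variety of dimension $\sum_e d_{se}d_{re} - \sum_\nu d_\nu^2 + 1$ on the stable (= irreducible) locus. One small point you gloss over, which the paper flags explicitly, is that the tree/forest poset is no longer directed once $|E^0| \geq 2$ (it splits into vertex-indexed directed pieces $\cT_\nu$), so $\Pi(V)$ is most cleanly written as a span of decorated paths modulo the relation $(p,x)\sim\sum_{e\in E^1_{rp}}(pe,x\cdot e)$ rather than as a single colimit, and the degenerate low-dimensional exceptions occur once per regular vertex rather than once overall.
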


We end the paper by comparing our work with previous results in the literature.
We recall Chen modules and their generalisations which before this work were the main source of irreducible $L$-modules \cite{Chen15,Ara-Rangaswamy14,Rangaswamy15,Hazrat-Rangaswamy16, Anh-Nam21, Vas23}.
We explain how to construct these $L$-modules using $A$-modules (in most cases being finite dimensional) and extend some of these constructions yielding new explicit families of irreducible $L$-modules.
Then, we mention a series of work of Ko\c{c} and \"Ozaydin where they construct similar functors between $\Rep(A)$ and $\Rep(L)$ to ours \cite{Koc-Ozaydin18,Koc-Ozaydin20,Koc-Ozaydin22}. 
We conclude by explaining how their results and applications are rather orthogonal to ours.

This present paper has been written in a locally trivial fashion: we introduce a few key notions (completeness, fullness, nondegeneracy) and constructions (the functors $\Pi,\Lambda,\Sigma,\nabla$) followed by elementary observations. Nevertheless, this leads to powerful and surprising results - the miracle being that many $L$-modules encode a canonical $A$-module. 

\section{Definitions of the main objects}

Fix a field $\bk$ and assume all vector spaces and algebras are over $\bk$.
Also fix a natural number $n\geq 2$ and will consider $n$-ary trees and forests, free algebras over $n$ generators, and the $n$th Leavitt algebra.
A representation $(\pi,V)$ of an algebra $B$ (also called $B$-module) is an algebra morphism $\pi:B\to \End(V)$ into the endomorphisms of $V$.
Following common usages, we consider \emph{right}-modules, writing $v\cdot b$ for the image of $v\in V$ under $\pi(b)$ for $b\in B$.
All module are assumed to be unital in the following weak sense: the $v\cdot b$ with $v\in V,b\in B$ span $V$.
Representations or modules are called irreducible or simple if they do not contain any nonzero proper subrepresentations.
We write $\delta_{j,k}$ for the Kronecker symbol.

\subsection{Trees and forests}
A \emph{tree} $t$ is a finite rooted complete $n$-ary tree.
Thus, vertices have either $n$ or zero children; 
the latter called \emph{leaves} forming the set $\Leaf(t)$.
We view a tree as a graph embedded in the plane with the root on top and leaves on the bottom. 
Children of a vertex are ordered (left to right) and the leaves inherit an order. 

A \emph{forest} $f$ is a finite collection of trees $f=(f_1,\dots,f_r)$ where $f_j$ is the $j$th tree of $f$ and roots are ordered from left to right.
If $f$ is a forest with $r$ leaves and $g$ a forest with $r$ roots, then form $f\circ g$ by stacking $f$ on top of $g$ lining the $k$th leaf of $f$ with the $k$th root of $g$.
Moreover, given any pair of forests $(f,h)$ form $f\otimes h$ by concatenating the list of trees of $f$ and $h$ (or concatenating the corresponding graphs in the plane placing $f$ to the left of $h$). For instance, $f=(f_1,\dots,f_r)=f_1\ot\dots\ot f_r$.

We write $\cF$ for the collection of all forests and $\cT$ for the collection of all trees.
This gives a monoidal category $(\cF,\circ,\ot)$ with collection of objects $\N$.
Hence, an arrow or morphism $f:l\to r$ of $\cF$ corresponds to a forests with $l$ leaves and $r$ roots.
Moreover, $\cT$ is a directed set for the order $\leq$ defined as $t\leq s$ if and only if there exists a forest $f$ satisfying $s=t\circ f.$ In that case we say $s$ is obtained from \textit{growing} the tree $t$.
We write $\Y$ for the unique tree with $n$ leaves which we also call a \emph{caret} or the $n$-ary caret, and $I$ for the \emph{trivial tree} having a single vertex and no edges (this vertex being simultaneously a root and a leaf).
Denote $f_{j,r}$ for the forest, that we call \emph{elementary}, with $r$ roots and a single caret attached to its $j$th root with $1\leq j\leq r$. 
Every forest is a finite composition of elementary ones. This decomposition is not unique; we have the relations:
\begin{equation}\label{eq:Thompson}f_{j,r}\circ f_{k,r+n-1} = f_{k,r} \circ f_{j+n-1, r+n-1} \text{ for } 1\leq k<j\leq r.\end{equation}
This yields a presentation of the category $\cF$: the generators being the $f_{j,r}$ subject to the relation \eqref{eq:Thompson}.

\subsection{Free algebra and Leavitt algebra}
Define $A:=A_\bk(n)$ as the free algebra in the variables $e_1,\dots,e_n$, that is, the algebra of noncommutative polynomials over the indeterminates $e_1,\dots,e_n$, or the path algebra over the graph with one vertex and $n$ loops (a bouquet of $n$ loops).
Define $L:=L_\bk(n)$ to be the \emph{Leavitt algebra} with generator set
$$e_1,\dots,e_n, e_1^*,\dots,e_n^*$$
satisfying the relations
$$\sum_{i=1}^n e_ie_i^*=1 \text{ and } e_j^*e_k=\delta_{j,k} \text{ for all } 1\leq j,k\leq n.$$
Here the $e_i^*$ are indeterminates and are not adjoint of the $e_i$ (adjoints do not have obvious meaning in this algebraic context).
Define the following algebra morphism 
$$\iota:A\to L, e_i\mapsto e_i$$ 
where we abuse notation.
If $p=p_1\dots p_k$ is a finite product of $e_i$'s, then we write $p^*$ for $p_k^*\dots p_1^*$ - the order has been reversed.
We will often identify such $p$ with vertices in the infinite rooted complete $n$-ary tree or with leaves of a tree.

\begin{remark}
	When $\bk=\C$, then there is an algebra morphism with dense range from $L$ to the Dixmier--Cuntz algebra $\cO_n$ obtained by sending the $e_i,e_j^*$ to the usual generators of $\cO_n$ and their adjoints \cite{Dixmier64,Cuntz77}. A similar morphism can be construct from $A$ to the $n$th Pythagorean algebra $P_n$ \cite{Brothier-Jones19}. 
	These morphisms together with $\iota:A\to L$ and the quotient map $P_n\onto\cO_n$ form a commutative diagram.
\end{remark}

We write $\Rep(A)$ and $\Rep(L)$ for the usual categories of \emph{unital} representations of $A$ and $L$. 
The algebra morphism $\iota:A\to L$ defines a forgetful functor 
\begin{equation}\label{eq:forget}\Lambda:\Rep(L)\to\Rep(A),\ (\sigma,W)\mapsto (\sigma\circ\iota,W).\end{equation}

\subsection{Isomorphism of categories with $\Rep(A)$ and certain functors.}

Consider the direct sum $\oplus$ of vector spaces as a monoidal product of the category $\Vect$ of vector spaces.
Let $\Func_{\ot}(\cF,\Vect)$ be the category of (contravariant) monoidal functors from $(\cF,\otimes)$ to $(\Vect,\oplus)$.
We have an isomorphism of categories
$$\Caret:\Rep(A)\to \Func_{\ot}(\cF,\Vect)$$
defined as follows.
Given $V\in \Rep(A)$ we set $\Caret(V)$ to be the unique monoidal contravariant functor from $\cF$ to $\Vect$ sending the object $1$ to $V$ and sending the $n$-ary caret $\Y$ to the mapping $V\ni v\mapsto (v\cdot e_1,\dots,v\cdot e_n) \in V^{\oplus n}.$
By monoidality, this functor sends the object $k$ to $V^{\oplus k}$ and the elementary forest $f_{j,n}$ to the morphism $\id^{\oplus j-1}\oplus \Caret(V)(\Y)\oplus \id^{\oplus n-j}.$
The universality of the category $\cF$ (with respect to elementary forests) proves that there exists indeed one and only one such functor.
An $A$-equivariant morphism $\theta:V\to \ti V$ defines a natural transformation in the obvious way via the $\theta^{\oplus k}:V^{\oplus k}\to (\ti V)^{\oplus k}.$ 

Conversely, if $\Phi:\cF\to\Vect$ is a monoidal functor, then it is completely determined by $\Phi(1)$ ($1$ denoting the corresponding object of $\cF$) and by $\Phi(\Y)$.
Taking coordinates of the mapping $\Phi(\Y):\Phi(1)\to\Phi(1)^{\oplus n}$ yields an $A$-module structure on $\Phi(1).$
This defines a functor in the other direction $\Func_{\ot}(\cF,\Vect)\to \Rep(A)$ which is the inverse of $\Caret$.

\subsection{From $\Rep(A)$ to $\Rep(L)$.}

We now explain one of the most important construction in this paper\,---\,the functor
$$\Pi:\Rep(A)\to \Rep(L).$$
Consider $(\pi,V)\in\Rep(A)$ and write $\Phi$ for the functor $\Caret(V).$
Define 
$$W:=\Pi(V)=\{(t,v):\ t\in\cT, v:\Leaf(t)\to V\}/\sim$$
where $\cT$ denotes the set of all finite rooted $n$-ary trees and where the equivalence relation $\sim$ is generated  by 
$$(t,v)\sim (t\circ f,\Phi(f)(v)).$$
Write $[t,v]$ for the class of $(t,v)$ under $\sim.$
The space $W$ is then composed of classes of decorated trees $[t,v]$ where $v$ is a tuple of vectors of $V$ indexed by the leaves of $t$.
When we grow trees we then apply the caret maps $\Phi(\Y)$ a finite number of times on the decorations.
Equivalently, $W$ is the direct limit of the directed systems of vectors spaces $V_t:=V^{\Leaf(t)}$ with connecting maps 
$$\iota_{t}^{t\circ f}: V_t\to V_{t\circ f}, (t,v)\mapsto (t\circ f, \Phi(f)(v)).$$

Now, we define an action of $L$ on $W$.
Consider $[t,v]\in W$ and decompose the tree $t$ into $t=\Y\circ (t_1,\dots,t_n)$. 
We put $$ [t,v]\cdot e_j:=[t_j,v_j]$$ 
where $v_j:\Leaf(t_j)\to V$ is the vector associated to the subtree $t_j$ of $t$. 
This consists in snipping the tree at the $j$th leaf.
Moreover, we set $$[s,w] \cdot e_j^*= [\Y\circ (I^{\ot j-1} \ot s \ot I^{\ot n-j} , 0^{\oplus j-1} \oplus w \oplus 0^{\oplus n-j}].$$
This consists in gluing the decorated tree $(s,w)$ at the $j$th leaf of the caret $\Y$ and by decorating the remaining leaves of $\Y$ with the zero vector.

A routine computation permits to check that we have defined an $L$-module structure.
Now, given a morphism of $A$-module $\theta:V\to V'$ we consider the formula
$$\Pi(\theta):[t,v_1,\dots,v_k]\mapsto [t,\theta(v_1),\dots,\theta(v_k)]$$
where here $t$ is a tree with $k$ leaves and $v_i\in V$ is the decoration of its $i$th leaf.
By construction we easily verify $\Pi$ defines a functor which moreover preserves direct sums.

\begin{proposition}
Consider $\Rep(A)$ and $\Rep(L)$ equipped with the direct sum for monoidal product.
The process $\Pi$ described above defines a monoidal functor
$$\Pi:\Rep(A)\to \Rep(L).$$
\end{proposition}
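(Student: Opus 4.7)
The plan is to work through the routine checks announced in the construction. Write $\Phi = \Caret(V)$ throughout, so that $W = \Pi(V)$ is the filtered colimit of $V_t := V^{\Leaf(t)}$ along the connecting maps $\iota_t^{t\circ f} = \Phi(f)$. I first verify that the prescribed formulas for $[t,v] \cdot e_j$ and $[t,v] \cdot e_j^*$ descend to this colimit: when $t = \Y \circ (t_1, \ldots, t_n)$ and $f = f_1 \ot \cdots \ot f_n$ with the subforest $f_i$ grafted onto the leaves of $t_i$, the refined tree $t \circ f$ decomposes as $\Y \circ (t_1 \circ f_1, \ldots, t_n \circ f_n)$ with decoration $(\Phi(f_1)(v_1), \ldots, \Phi(f_n)(v_n))$, so the snipping formula for $\cdot e_j$ commutes with growth. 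The degenerate case $t = I$ reduces to $t = \Y$ via $[I,v] = [\Y,(v\cdot e_1,\ldots, v\cdot e_n)]$. For $\cdot e_j^*$ one grows the freshly produced caret by $I^{\ot j-1} \ot f \ot I^{\ot n-j}$ and uses monoidality of $\Phi$ together with $\Phi(\cdot)(0) = 0$.

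Second, I check the two Leavitt relations. For $e_j^* e_k = \delta_{j,k}$: after applying $\cdot e_j^*$ the top caret has decoration $0$ in every slot except the $j$-th, so snipping at the $k$-th leaf returns $[t,v]$ when $k=j$ and a class $[s,0]$ otherwise, and the latter is zero in $W$ because the connecting maps are linear and $\Phi(f)(0)=0$. For $\sum_i e_i e_i^* = 1$ applied to $[\Y \circ (t_1, \ldots, t_n), (v_1, \ldots, v_n)]$, the $i$-th summand $[\Y \circ (I^{\ot i-1} \ot t_i \ot I^{\ot n-i}), 0 \oplus v_i \oplus 0]$ can be grown to the common refinement $\Y \circ (t_1, \ldots, t_n)$ by expanding each trivial slot $I$ into the corresponding $t_k$; since $\Phi$ sends zero to zero, the refined decoration places $v_i$ in slot $i$ and $0$ elsewhere, and summing over $i$ reassembles $(v_1, \ldots, v_n)$, recovering $[t,v]$.

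For functoriality, an $A$-equivariant $\theta : V \to V'$ intertwines the caret maps, hence commutes with $\Phi$, so $[t, v] \mapsto [t, \theta(v))]$ descends to the colimit; the functor axioms and $L$-linearity are transparent from the formulas because $\theta(0) = 0$. Monoidality is immediate from the colimit description: $V \mapsto V^{\Leaf(t)}$ preserves finite direct sums and filtered colimits commute with finite direct sums in $\Vect$, so the natural map $\Pi(V \oplus V') \to \Pi(V) \oplus \Pi(V')$, $[t, v \oplus v'] \mapsto ([t,v], [t,v'])$, is a linear isomorphism, and the piecewise formulas for $\cdot e_j$ and $\cdot e_j^*$ make it $L$-equivariant. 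The only real subtlety\,---\,rather than a genuine obstacle\,---\,is bookkeeping how the tree decompositions $t = \Y \circ (t_1, \ldots, t_n)$ interact with growth, which is precisely what the presentation of $\cF$ via the elementary forests and the relation \eqref{eq:Thompson} is designed to handle.
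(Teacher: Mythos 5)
Your proposal is correct and carries out exactly the routine verifications the paper sketches and leaves to the reader: well-definedness of the $e_j$ and $e_j^*$ actions on classes, the two Leavitt relations, functoriality of $\Pi(\theta)$, and preservation of direct sums via commutation of the filtered colimit with finite biproducts in $\Vect$. This is the same approach; the paper simply declares these checks ``routine'' without writing them out, so no substantive comparison is needed.
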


\begin{example}   
Below illustrates the action of $e_1e_2$ on $[\wedge, v_1, v_2]$ for $n=2$. The tree is first ``grown'' by adding a caret, and then ``snipped'' at the vertex given by the path $e_1e_2$.

\begin{center}
	\begin{tikzpicture}[baseline=0cm, scale = 1]
		\draw (0,0)--(-.5, -.5);
		\draw (0,0)--(.5, -.5);
		
		\node[label={[yshift=-22pt] \normalsize $v_1$}] at (-.5, -.5) {};
		\node[label={[yshift=-22pt] \normalsize $v_2$}] at (.5, -.5) {};
		
		\node[label={[yshift= -3pt] \normalsize }] at (1.7, -.7) {$\sim$};
	\end{tikzpicture}%
	\begin{tikzpicture}[baseline=0cm, scale = 1]
		\draw[thick] (0,0)--(-.5, -.5);
		\draw (0,0)--(.5, -.5);
		\draw (-.5, -.5)--(-.9, -1);
		\draw[thick] (-.5, -.5)--(-.1, -1);
		
		\node[label={[yshift=-22pt] \normalsize $v_1 \cdot e_1~~~~$}] at (-.9, -1) {};
		\node[label={[yshift=-22pt] \normalsize $~~~~v_1 \cdot e_2$}] at (-.1, -1) {};
		\node[label={[yshift=-22pt] \normalsize $v_2$}] at (.5, -.5) {};
		
		\node[label={[yshift= -3pt] \normalsize $e_1e_2$}] at (1.4, -.7) {$\longmapsto$};
	\end{tikzpicture}%
	\begin{tikzpicture}[baseline=0cm, scale = 1]
		\node[label={[yshift=-25pt] \normalsize $v_1 \cdot e_2$}] at (0, -.5) {$\bullet$};
	\end{tikzpicture}.%
\end{center}

Below illustrate the action of $e_1^*$ on $[\Y,v_1,v_2]$:

\begin{center}
	\begin{tikzpicture}[baseline=0cm, scale = 1]
		\draw[thick] (0,-.3)--(-.5, -.8);
		\draw[thick] (0,-.3)--(.5, -.8);
		
		\node[label={[yshift=-22pt] \normalsize $v_1$}] at (-.5, -.8) {};
		\node[label={[yshift=-22pt] \normalsize $v_2$}] at (.5, -.8) {};
		
		\node[label={[yshift= -3pt] \normalsize $e^*_{1}$}] at (1.4, -.7) {$\longmapsto$};
	\end{tikzpicture}%
	\begin{tikzpicture}[baseline=0cm, scale = 1]
		\draw (0,0)--(-.5, -.5);
		\draw (0,0)--(.5, -.5);
		\draw[thick] (-.5, -.5)--(-.9, -1);
		\draw[thick] (-.5, -.5)--(-.1, -1);
		
		\node[label={[yshift=-22pt] \normalsize $v_1$}] at (-.9, -1) {};
		\node[label={[yshift=-22pt] \normalsize $v_2$}] at (-.1, -1) {};
		\node[label={[yshift=-22pt] \normalsize $0$}] at (.5, -.5) {};		
	\end{tikzpicture}%
\end{center}
\end{example}

\begin{definition}
Let $j_V:V\to W$ be the \emph{canonical map} $v\mapsto [I,v]$ (writing $j$ for $j_V$ in clear context).
\end{definition}
By construction we observe that $j$ defines a morphism of $A$-module from $V$ to $\Lambda(\Pi(V))$ (see \eqref{eq:forget} for the definition of $\Lambda$).
Moreover, the collection $(j_V:\ V\in \Rep(A))$ defines a natural transformation from $\id$ to $\Lambda\circ \Pi$.

\begin{remark}\label{rem:Jones-technology}
Originally, given a functor $\Phi:\cF\to \cC$ in a category $\cC$ when $n=2$ Jones defined an action of the fraction groupoid of $\cF$ that he restricted to its first isotropy group. This yielded an action of Thompson's group $F$ \cite{Jones18}.
It was then observed that if $\Phi$ was monoidal with $\cC$ the category of Hilbert spaces with linear isometries for morphisms and direct sum for monoidal product, then the this action of $F$ was a unitary representation that uniquely extended to a representation of the Cuntz algebra \cite{Brothier-Jones19}.

Similarly, we observe that a monoidal functor $\Phi:\cF\to\Vect$ defines a representation of $F$ which uniquely extends to a representation of $L$. 
Then, the isomorphism of categories $\Caret:\Rep(A)\to\Func_\otimes(\cF,\Vect)$ yields $\Pi:\Rep(A)\to\Rep(L)$.
\end{remark}

\begin{proposition} \label{prop:jones-functor-surj}
If $W\in \Rep(L)$, then the canonical map $j_W:W\to \Pi(W)$ defines an isomorphism of $L$-modules.
Moreover, the collection $(j_W:W\in\Rep(L))$ defines a natural isomorphism from $\id$ to $\Pi\circ \Lambda.$
In particular, any $W\in \Rep(L)$ is isomorphic to $\Pi(V)$ for a certain $V\in \Rep(A).$
\end{proposition}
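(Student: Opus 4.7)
My plan is to verify three things in turn: (i) the canonical map $j_W$ is $L$-equivariant (not merely $A$-equivariant, as is automatic from its construction), (ii) $j_W$ is injective, (iii) $j_W$ is surjective. Naturality in $W$ will then follow immediately from the explicit formula $\Pi(\theta)([t,v_1,\dots,v_k])=[t,\theta(v_1),\dots,\theta(v_k)]$ defining $\Pi$ on morphisms.

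The backbone of all three steps is a pair of elementary identities in $L$ that I would establish first: for any tree $t$ with leaves labelled by paths $p_1,\dots,p_k$ in the infinite $n$-ary tree,
\[ \sum_{i=1}^k p_ip_i^* = 1 \quad \text{and} \quad p_i^*p_j = \delta_{i,j}. \]
Both are proved by induction on $t$: the first by expanding a leaf $p$ via $p=\sum_j pe_je_j^*$; the second by combining $e_j^*e_k=\delta_{j,k}$ with the observation that two distinct leaves of a common tree are prefix-incomparable in the $n$-ary tree, so some $e_j^*e_k$ with $j\neq k$ eventually appears.

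For step (i), since $j_W$ is already $A$-equivariant, I only have to check $j_W(w\cdot e_i^*)=j_W(w)\cdot e_i^*$. Growing $I$ into $\Y$ and using $e_i^*e_j=\delta_{i,j}$, both sides should collapse to the class $[\Y,0,\dots,0,w,0,\dots,0]$ with $w$ in position $i$. For step (ii), if $j_W(w)=0$ then the definition of $\sim$ produces a tree $t$ with $w\cdot p_i=0$ at every leaf $p_i$, so $w=w\cdot\sum_i p_ip_i^*=\sum_i(w\cdot p_i)\cdot p_i^*=0$. For step (iii), given a general $[t,v_1,\dots,v_k]\in\Pi(W)$, the natural candidate preimage is $w:=\sum_{i=1}^k v_i\cdot p_i^*\in W$ (using here the full $L$-action on $W$, not just the $A$-action); the orthonormality identity $p_i^*p_j=\delta_{i,j}$ then gives $w\cdot p_j=v_j$ and hence $j_W(w)=[I,w]\sim[t,w\cdot p_1,\dots,w\cdot p_k]=[t,v_1,\dots,v_k]$.

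I do not foresee any genuine obstacle. The only technical ingredient is the pair of summation identities above, which encode the fact that $L$ carries a ``partition of unity'' $\{p_ip_i^*\}$ indexed by the leaves of any finite tree, together with an orthonormality property between leaves. Once those are in hand, each of (i), (ii), (iii) reduces to a short direct computation using only the defining relations of $L$ and the definition of the equivalence $\sim$.
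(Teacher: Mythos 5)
Your proof is correct and takes essentially the same route as the paper's. The paper establishes the bijectivity of $j_W$ by explicitly exhibiting the inverse $[t,w]\mapsto\sum_{\ell\in\Leaf(t)}w_\ell\cdot e_\ell^*$ (noting that the caret map $w\mapsto(w\cdot e_1,\dots,w\cdot e_n)$ is a bijection $W\to W^n$ with inverse $(w_i)_i\mapsto\sum_i w_i\cdot e_i^*$), and then separately verifies $L$-modularity via the $e_i^*$ computation; your decomposition into injectivity and surjectivity, together with the partition-of-unity identities $\sum_i p_ip_i^*=1$ and $p_i^*p_j=\delta_{i,j}$, packages the same computations in a slightly different order but with identical essential content.
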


\begin{proof}
Fix an $L$-module $W$ that we consider as a $A$-module via $\iota:A\to L$.
Write $\widehat W$ for the $L$-module $\Pi(W).$
Let us show that the canonical map $j:W\to \widehat W, w\mapsto [I,w]$ defines an isomorphism of $L$-modules. 
Note that $w\mapsto (w\cdot e_1,\dots,w\cdot e_n)$ is a bijection from $W$ to $W^n$ with inverse $(w_1,\dots,w_n)\mapsto \sum_i w_i\cdot e_i^*.$
This implies that for any tree $t$ we have that 
$$\Phi(t)w = (w\cdot e_\ell)_{\ell\in \Leaf(t)}$$ 
is bijective from $W$ to $W^{\Leaf(t)}$ with inverse map $$(w_\ell)_\ell\mapsto \sum w_\ell\cdot e_\ell^*.$$
We deduce that the formula
$$[t,w]\mapsto \sum_{\ell\in \Leaf(t)} w_\ell\cdot e_\ell^*$$
defines a linear bijection from $\widehat W$ to $W$.
Moreover, we observe that it is an inverse of $j$ implying that $j$ is an isomorphism of $A$-modules.
Moreover, observe that 
\begin{align*}
j(w\cdot e_i^*) & = [I,w\cdot e_i^*] = [\Y, \Phi(\Y)(w\cdot e_i^*)]\\
& = [\Y, w\cdot e_i^* e_1, \dots, w\cdot e_i^* e_n] \\
& = [\Y , 0 ,\dots, 0 , w, 0, \dots, 0] \text{ with $w$ at the $i$th spot}\\
& = [I,w]\cdot e_i^*
\end{align*}
proving $L$-modularity of $j$. Thus, we have shown $j$ realises an isomorphism of $L$-modules between $W$ and $\Pi(W)$. 
Naturality follows from an elementary computation.
\end{proof}

\section{Complete, full and degenerate representations} \label{sec:base-case}
We now introduce our main key concepts. 
Completeness and fullness are algebraic analogues of previous notions defined by the authors for modules over $C^*$-algebras \cite{Brothier-Wijesena24b}.
Degeneracy is a new concept proper to this algebraic situation.
In this section we fix $(\pi,V)\in \Rep(A)$ and set $\Phi:=\Caret(V)$ and $W:=\Pi(V).$
Recall that the $e_i,1\leq i\leq n$ are the generators of $A$. If $p=p_1\dots p_k$ is a word in the $e_i$', then it has \emph{length} $|p|:=k.$
Such $p$ are often called \emph{words, products}, or \emph{paths}.

\subsection{Complete subrepresentations}
\begin{definition} \label{def:complete-subrep}

\begin{enumerate}
\item A subrepresentation $V_c\subset V$ is called \emph{complete} if for all $v\in V$, there exists $k\geq 1$ satisfying $v\cdot p\in V_c$ for all paths $p$ with $|p|\geq k$.
Equivalently, $V_c\subset V$ is complete if each $v\in V$ admits a tree $t\in\cT$ satisfying $\Phi(t)(v)\in V_c^{\Leaf(t)}$.
\item Write $\Comp(V)$ for the poset of all complete subrepresentations of $V$ ordered by inclusion.
\item Write $\cS$ for the class of representations $V'$ of $A$ that admits a \emph{smallest} complete subrepresentation denoted $V_s'$. 
Write $\Rep_\cS(A)$ for the associated full sub-category. 
\end{enumerate}
\end{definition}

We start with a reformulation of completeness which motivates our choice of terminology.
\begin{observation}\label{obs:complete}
A subrepresentation $V_c\subset V$ is complete if and only if the subspace $j(V_c)\subset W$ generates the $L$-module $W$, i.e.~the smallest $L$-submodule of $W$ containing $j(V_c)$ is $W$. 
This means that the inclusion map $\epsilon:V_c\into V$ defines an isomorphism $\Pi(\epsilon):\Pi(V_c)\to \Pi(V)$ of $L$-modules.
\end{observation}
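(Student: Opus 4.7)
The observation packages three conditions on a subrepresentation $V_c\subset V$:
(1) $V_c$ is complete;
(2) $j(V_c)$ generates $W=\Pi(V)$ as an $L$-module;
(3) $\Pi(\epsilon):\Pi(V_c)\to W$ is an isomorphism of $L$-modules.
The plan is to prove $(2)\Leftrightarrow(3)$ by a direct computation of the image of $\Pi(\epsilon)$, and then to prove $(1)\Leftrightarrow(2)$ by unwinding the direct-limit definition of $W$.

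For $(2)\Leftrightarrow(3)$, injectivity of $\Pi(\epsilon)$ is automatic: each $\Pi(V')$ is the filtered colimit of the system $(V'^{\Leaf(t)})_{t\in\cT}$, and filtered colimits in $\Vect$ are exact, so the level-wise inclusions $V_c^{\Leaf(t)}\hookrightarrow V^{\Leaf(t)}$ pass to the limit. To identify the image of $\Pi(\epsilon)$ with the $L$-submodule generated by $j(V_c)$ inside $W$, I rely on two elementary identities proved directly from the action formulas: for $v\in V$ and a path $p$, $j(v)\cdot p=j(v\cdot p)$ (by growing the trivial tree to a caret and snipping), and $j(v)\cdot p^{*}$ is the class of some tree decorated by $v$ at the leaf reached by $p$ and by $0$ at all other leaves. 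Combined with the $pq^{*}$ spanning set of $L$, these show that every $j(w)\cdot x$ with $w\in V_c$ and $x\in L$ lies in the image of $\Pi(\epsilon)$; conversely, every generator $[t,w]$ of $\Pi(V_c)$ decomposes as $\sum_{\ell\in \Leaf(t)} j(w_\ell)\cdot p_\ell^{*}$ along the leaves of $t$, hence lies in the $L$-submodule generated by $j(V_c)$.

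For $(1)\Rightarrow(2)$, completeness yields, for every $v\in V$, a tree $t$ with $\Phi(t)(v)\in V_c^{\Leaf(t)}$; then $j(v)=[I,v]=[t,\Phi(t)(v)]$ lies in the image of $\Pi(\epsilon)$, and since $W$ is $L$-generated by $j(V)$ (by the same leaf-decomposition), we conclude that $j(V_c)$ $L$-generates $W$. For $(2)\Rightarrow(1)$, pick $v\in V$ and write $j(v)=\Pi(\epsilon)[t,w]$ with $w$ valued in $V_c$. The equality $[t,w]=[I,v]$ in $W$ unwinds, via the generating equivalence relation on the direct limit, to the existence of a forest $f$ satisfying $\Phi(f)(w)=\Phi(t\circ f)(v)$ in $V^{\Leaf(t\circ f)}$. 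Since $V_c$ is an $A$-subrepresentation, it is preserved by $\Phi$ of any forest, so the left-hand side lies in $V_c^{\Leaf(t\circ f)}$; therefore so does $\Phi(t\circ f)(v)$, which is exactly the tree-formulation of completeness applied to the tree $t\circ f$.

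The main subtlety is the final step of $(2)\Rightarrow(1)$: an equality $[t_1,w_1]=[t_2,w_2]$ in the direct limit does not in general force the decorations to agree at the level of $V^{\Leaf(t_i)}$, because the natural map $V\to\Lambda\Pi(V)$ may have kernel (degeneracy). One therefore cannot extract completeness on the tree $t$ itself, but must grow to the common tree $t\circ f$ and exploit the stability of $V_c$ under all caret maps. The rest is bookkeeping with the two action formulas.
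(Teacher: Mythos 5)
The paper records this as an unproved observation, and your argument supplies the natural proof by unwinding the direct-limit definition of $\Pi$; it is correct, including the identification of the image of $\Pi(\epsilon)$ with the $L$-submodule generated by $j(V_c)$ via the leaf decomposition $[t,w]=\sum_{\ell}j(w_\ell)\cdot p_\ell^*$, and your care in growing to a common tree in the direction $(2)\Rightarrow(1)$ is exactly where degeneracy could otherwise bite. One small remark: the injectivity of $\Pi(\epsilon)$, which you justify by exactness of filtered colimits in $\Vect$, can be seen even more elementarily\,---\,if $\Pi(\epsilon)[t,w]=0$ then $\Phi_V(f)(w)=0$ for some forest $f$, and since $w$ is $V_c$-valued and $\Phi_V(f)\restriction_{V_c}=\Phi_{V_c}(f)$ this already gives $[t,w]=0$ in $\Pi(V_c)$.
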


We describe completeness using block matrix decompositions.
\begin{remark} \label{rem:submodule-decomp}
If $V_c\subset V$ is a submodule, then we can find a basis of the vector space $V$ so that each $\pi(e_i)$ decomposes into the following block matrix over the direct sum decomposition $V=V_c\oplus X_c$ ($X_c$ being an algebraic complement of $V_c$):
$$\begin{pmatrix}
Z_{i} & B_i \\ 
0 & N_i
\end{pmatrix}.$$
The submodule is complete when there exists a natural number $k\geq 1$ satisfying that any product $N_{i_1}\dots N_{i_k}$ with $k$ factors equals $0$.
In particular, all the $N_i$ are nilpotent.
Conversely, a family of such matrices defines a complete submodule.
\end{remark}

\begin{proposition} \label{prop:complete-rep}
Consider $\theta:V\to V'$ a morphism of $A$-modules.

\begin{enumerate}
\item If $V_c'\subset V'$ is complete, then so is $\theta^{-1}(V_c')\subset V$.
\item If $V_c\subset V$ is complete, then so is $\theta(V_c)\subset \theta(V)$.
\item If $k\geq 2$, $V_1\subset\dots\subset V_k$ is a chain of $A$-modules so that $V_i\subset V_{i+1}$ is complete for $1\leq i\leq k-1$, then $V_1\subset V_k$ is complete.
\item If $V_c\subset V$ is complete, then so is $j(V_c)\subset \Lambda\circ \Pi(V)$.
\item The poset $\Comp(V)$ is closed under taking finite intersections.
\end{enumerate}
\end{proposition}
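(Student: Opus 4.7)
The plan is to use the path reformulation of completeness throughout: $V_c\subset V$ is complete precisely when, for each $v\in V$, there exists $k\geq 1$ with $v\cdot p\in V_c$ for every word $p$ of length $\geq k$. Combined with the identity $\theta(v\cdot p)=\theta(v)\cdot p$ valid for any $A$-equivariant map $\theta$, this immediately dispatches (1) and (2). For (1), given $v\in V$, pick $k$ witnessing completeness of $V_c'$ at $\theta(v)$; then $\theta(v\cdot p)=\theta(v)\cdot p\in V_c'$ for $|p|\geq k$, so $v\cdot p\in \theta^{-1}(V_c')$. For (2), given $w=\theta(v)\in\theta(V)$, pick the $k$ associated to $v$ and push the inclusion $v\cdot p\in V_c$ forward through $\theta$.

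For (3) an induction reduces to the case $k=3$. Fix $v\in V_3$. Completeness of $V_2\subset V_3$ supplies $k_2$ with $v\cdot p\in V_2$ for all $|p|\geq k_2$. Only finitely many vectors appear as $v\cdot p$ with $|p|=k_2$ (at most $n^{k_2}$ of them), and completeness of $V_1\subset V_2$ gives for each such vector $u$ an integer $k_1(u)$. Let $M$ be the maximum of these; then $k_2+M$ witnesses completeness of $V_1\subset V_3$ at $v$, since any path of length $\geq k_2+M$ factors as $pq$ with $|p|=k_2$ and $|q|\geq M\geq k_1(v\cdot p)$, hence $(v\cdot p)\cdot q\in V_1$.

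The heart of the proposition is (4), which demands genuine understanding of the $A$-action on $W=\Pi(V)$. Given $[t,v]\in W$ with leaf decorations $v_\ell\in V$ for $\ell\in\Leaf(t)$, a direct inspection of the rule $[t,v]\cdot e_i=[t_i,v_i]$ (together with $j(u)\cdot p=j(u\cdot p)$) shows that once $|p|$ exceeds the height $h(t)$ of $t$, the path $p$ factors uniquely as $p_\ell q$ where $p_\ell$ traces the route from the root of $t$ to some leaf $\ell$, and one computes $[t,v]\cdot p=j(v_\ell\cdot q)$. Applying completeness of $V_c$ to each of the finitely many decorations $v_\ell$ supplies integers $k_\ell$, and $k:=h(t)+\max_\ell k_\ell$ witnesses completeness of $j(V_c)$ at $[t,v]$: for $|p|\geq k$ we have $|q|=|p|-|p_\ell|\geq \max_\ell k_\ell\geq k_\ell$, so $v_\ell\cdot q\in V_c$ and $[t,v]\cdot p\in j(V_c)$. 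I expect this to be the trickiest step, since it is the only item not reducible to a bare application of $A$-equivariance\,---\,it genuinely uses the explicit combinatorial description of the direct limit $W$.

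Finally, (5) is a triviality from the path characterization: if $V_c,V_c'\subset V$ are complete and $v\in V$ admits bounds $k,k'$ respectively, then $v\cdot p\in V_c\cap V_c'$ for every $|p|\geq\max(k,k')$, so $V_c\cap V_c'$ is complete. An obvious induction then extends this to arbitrary finite intersections.
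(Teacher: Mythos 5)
Your proof is correct and follows essentially the same strategy as the paper for (1), (2) and (5). For (3) you are actually more careful than the paper: the paper's proof asserts, for each $i$, a single bound $n_i$ that works simultaneously for every $v_i\in V_i$, which is not what completeness literally gives (the bound depends on the vector). Your argument repairs this by observing that there are only finitely many words of a fixed length (here at most $n^{k_2}$), so after one step you are dealing with a finite set of intermediate vectors and can take a maximum; this is the right way to make the chaining rigorous. For (4) the paper factors the argument as: $j(V)\subset\Lambda\circ\Pi(V)$ is complete (this is the ``paths longer than the height of $t$ land in $j(V)$'' observation), then $j(V_c)\subset j(V)$ is complete by item (2), and finally one invokes item (3) on the chain $j(V_c)\subset j(V)\subset\Lambda\circ\Pi(V)$. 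You instead prove the same thing in one pass by explicitly computing $[t,v]\cdot p=j(v_\ell\cdot q)$ once $|p|\geq h(t)$ and then applying completeness of $V_c$ to each of the finitely many leaf decorations $v_\ell$; the two arguments share the same core observation, yours being a little more direct and the paper's being slightly more modular. Both are fine.
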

\begin{proof}
Proof of (1).
Fix $v\in V$. Since $V_c'\subset V'$ is complete there exists $k\geq 1$ so that for any product $p$ with $|p|\geq k$ we have $\theta(v)\cdot p\in V_c'.$
Hence, $\theta(v\cdot p)\in V_c'$ and thus $v\cdot p\in \theta^{-1}(V_c').$

Proof of (2).
Consider $\theta(v)\in \theta(V)$.
By completenss of $V_c\subset V$ there exists $k\geq 1$ so that if $|p|\geq k,$ then $v\cdot p\in V_c.$
Therefore, $\theta(v)\cdot p\in \theta(V_c)$ proving completeness of $\theta(V_c)\subset\theta(V).$

Proof of (3).
For any $2\leq i\leq k$ there exists $n_i\geq 1$ satisfying $ v_i\cdot p\in V_{i-1}$ for all $v_i\in V_i$ and word $|p|\geq n_i$.
Now, if $v\in V_k$ and a word $p$ has length larger than $\sum_{i=2}^k n_i$, then $v\cdot p\in V_1$ proving that $V_1\subset V_k$ is complete.

Proof of (4).
Consider $[t,v]\in \Pi(V)$ where $t\in\cT$ and $v=(v_\ell)_{\ell}$ is a tuple of elements of $V$ indexed by the leaves $\ell$ of $t$.
Observe that $ [t,v]\cdot e_\ell=[I,v_\ell]\in j(V)$ for all $\ell\in \Leaf(t)$. 
Hence, $j(V)\subset \Lambda\circ\Pi(V)$ is complete by Observation \ref{obs:complete}.
Now, $V_c\subset V$ is complete which together with (2) imply that $j(V_c)\subset j(V)$ is complete.
We conclude using (3) and the chain $j(V_c)\subset j(V)\subset \Lambda\circ\Pi(V)$.

Proof of (5).
Consider some complete submodules $V_c^1,\dots,V_c^m$ in $V$ and their intersection $V_c.$
Take $v\in V$. For each $1\leq i\leq m$ there exists $n_i\geq 1$ so that $v\cdot p\in V_c^i$ for all word $p$ with $|p|\geq n_i$.
If $n:=\max(n_j:\ 1\leq j\leq m)$ and $|p|\geq n$, then $v\cdot p\in V_c$ proving completeness.
\end{proof}

We now derive properties concerning the class $\cS$.

\begin{proposition}\label{prop:class-S}
Consider $V$ in $\cS$ with smallest complete subrepresentation $V_s\subset V$.
\begin{enumerate}
\item The subrepresentation $V_s$ is equal to the intersection of all the complete ones.
\item If $V\subset V'$ is a complete subrepresentation, then $V'\in\cS$ and $V_s\subset V'$ is the smallest complete subrepresentation.
\item If $V,V'\in \cS$ and $\theta:V\to V'$ is a morphism of $A$-modules, then $\theta(V_s)\subset V_s'$ where $V_s'\subset V'$ is the smallest complete subrepresentation.
Moreover, $\theta(V_s)\subset \theta(V)$ is the smallest complete submodule.
\item We have that $\Lambda\circ\Pi(V)$ is in $\cS$ with smallest complete submodule $j(V_s)$.
\item If $V$ is finite dimensional, then $V\in \cS$.
\end{enumerate}
\end{proposition}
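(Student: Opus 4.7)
The plan is to derive each item as a direct consequence of the preservation properties packaged in Proposition \ref{prop:complete-rep}, handling the five parts essentially independently. Item (1) is immediate: by definition $V_s$ is itself complete and contained in every complete subrepresentation, so it appears in the family being intersected, giving both inclusions at once. For (2), I would first invoke Proposition \ref{prop:complete-rep}(3) applied to the chain $V_s \subset V \subset V'$ to conclude that $V_s$ is complete in $V'$. Then for any complete $V_c' \subset V'$, Proposition \ref{prop:complete-rep}(1) applied to the inclusion $V \hookrightarrow V'$ makes $V_c' \cap V$ complete in $V$, which forces $V_s \subset V_c' \cap V \subset V_c'$; so $V_s$ is the smallest complete subrepresentation of $V'$.

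For (3), the first assertion follows by noting that $\theta^{-1}(V_s')$ is complete in $V$ by Proposition \ref{prop:complete-rep}(1), hence $V_s \subset \theta^{-1}(V_s')$ and $\theta(V_s) \subset V_s'$. The second assertion is obtained by factoring $\theta$ through its corestriction $\theta_0 : V \onto \theta(V)$: Proposition \ref{prop:complete-rep}(2) shows that $\theta(V_s) = \theta_0(V_s)$ is complete in $\theta(V)$, and for any complete $W \subset \theta(V)$, Proposition \ref{prop:complete-rep}(1) applied to $\theta_0$ makes $\theta_0^{-1}(W)$ complete in $V$, so $V_s \subset \theta_0^{-1}(W)$ and $\theta(V_s) \subset W$. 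Item (4) is analogous: Proposition \ref{prop:complete-rep}(4) gives completeness of $j(V_s)$ in $\Lambda \circ \Pi(V)$, while for any complete $U \subset \Lambda \circ \Pi(V)$, Proposition \ref{prop:complete-rep}(1) makes $j^{-1}(U)$ complete in $V$, hence $V_s \subset j^{-1}(U)$ and $j(V_s) \subset U$.

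For (5), I would argue by dimension. Since $V$ itself is trivially complete, the family $\Comp(V)$ is non-empty, and finite-dimensionality allows choosing some $V_s \in \Comp(V)$ of minimal vector-space dimension. For any $V_c \in \Comp(V)$, Proposition \ref{prop:complete-rep}(5) ensures that $V_s \cap V_c$ is complete, and since $\dim(V_s \cap V_c) \leq \dim V_s$, minimality forces $V_s \cap V_c = V_s$, i.e.\ $V_s \subset V_c$. I do not expect any step to present a substantial obstacle, since the heavy lifting has already been done in Proposition \ref{prop:complete-rep}; the only mildly delicate moment is this last one, where one must upgrade a dimension-minimal element of $\Comp(V)$ to the smallest element of the poset, and that is precisely where closure under finite intersections intervenes.
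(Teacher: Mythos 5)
Your proof is correct and follows essentially the same route as the paper, reducing each item to the preservation properties of Proposition \ref{prop:complete-rep}; the only differences are cosmetic. In items (2) and (4) you argue directly that the candidate is complete and is contained in every complete submodule (via preimages under the inclusion or under $j$), whereas the paper instead chains through item (1) and/or item (3) of the proposition being proved --- both are valid, and your version is marginally more self-contained. In item (5) you take a dimension-minimal element of $\Comp(V)$ and use closure under finite intersections to show it is the smallest, which is precisely the content of the elementary lemma the paper invokes and proves by induction on dimension.
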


\begin{proof}
Proof of (1). This follows from $\Comp(V)$ being closed under finite intersections.

Proof of (2).
The smallest complete submodule $V'_s$ of $V'$ is the intersection of all complete submodules of $V'$ by (1).
Since $V\subset V'$ is complete we may restrict to the intersection of all complete submodules of $V'$ contained in $V$. This is precisely equal to $V_s$ by (1).

Proof of (3).
By (1) of Proposition \ref{prop:complete-rep} we have that $V_c:=\theta^{-1}(V'_s)$ is complete inside $V$ since $V_s'\subset V'$ is.
Hence, $V_s\subset V_c$ by minimality.
Therefore, $\theta(V_s)\subset \theta(V_c)\subset V'_s$. 
By (2) of Proposition \ref{prop:complete-rep}, $\theta(V_s)\subset \theta(V)$ is complete. 
If $Z_c\subset \theta(V)$ is complete, then so is $\theta^{-1}(Z_c)\subset V$ and thus $V_s\subset \theta^{-1}(Z_c)$ by minimality. This implies that $\theta(V_s)\subset Z_c$ and thus $\theta(V_s)\subset \theta(V)$ is the smallest complete submodule.

Proof of (4).
By (4) of Proposition \ref{prop:complete-rep} we have that $j(V)\subset \Lambda\circ\Pi(V)$ is complete.
By (3) $j(V_s)\subset j(V)$ is the smallest complete submodule. We can now conclude using (2).

Proof of (5).
This comes from the following elementary fact: given a finite dimensional vector space $V$ and $\cV$ a nonempty family of nonzero vector subspaces of $\cV$ that is closed under taking finite intersections, then $\cV$ admits a smallest element for the inclusion.
This can be proven by induction on $\dim(V)$. We then conclude using (5) of Proposition \ref{prop:complete-rep}.
\end{proof}

\subsection{Full representations}

\begin{definition} \label{def:full}
A representation $V\in \Rep(A)$ is \emph{full} if it does not admit any proper complete subrepresentation.
We write $\Rep_{\full}(A)$ for the associated full subcategory.
\end{definition}

Extracting the smallest complete submodule is a functorial process as proven below.

\begin{proposition}\label{prop:sigma}
There exists an essentially surjective functor
$$\Sigma:\Rep_\cS(A)\to \Rep_{\full}(A)$$
so that $\Sigma(V)=V_s$ is the smallest complete submodule of $V$ and 
$$\Sigma(\theta)=\theta\restriction_{V_s}\in \Hom(V_s,V_s')$$ is the restriction to $V_s$ for $\theta\in\Hom(V,V')$.
\end{proposition}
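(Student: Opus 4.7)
The plan is to verify three points: that $\Sigma$ is well-defined on objects with values in $\Rep_{\full}(A)$, that it is well-defined on morphisms and functorial, and that it is essentially surjective. All the real content has already been encapsulated in Propositions \ref{prop:complete-rep} and \ref{prop:class-S}, so each verification should be short.

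First I would check that $V_s$ actually lies in $\Rep_{\full}(A)$. Existence of $V_s$ is part of the hypothesis $V\in\cS$. For fullness, suppose $V_c\subset V_s$ is complete. Then the chain $V_c\subset V_s\subset V$ consists of two nested complete inclusions, so Proposition \ref{prop:complete-rep}(3) yields that $V_c\subset V$ is complete. Minimality of $V_s$ among complete subrepresentations of $V$ forces $V_s\subset V_c$, hence $V_c=V_s$, and $V_s$ has no proper complete subrepresentation.

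Next I would handle morphisms. For $\theta:V\to V'$ in $\Rep_\cS(A)$, Proposition \ref{prop:class-S}(3) directly gives $\theta(V_s)\subset V_s'$, so the restriction $\theta\restriction_{V_s}:V_s\to V_s'$ is a legitimate $A$-module morphism. Preservation of identities is tautological, and preservation of composition follows from $(\theta'\circ\theta)\restriction_{V_s}=\theta'\restriction_{V_s'}\circ\theta\restriction_{V_s}$, where this identity uses precisely the containment $\theta(V_s)\subset V_s'$.

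Finally, for essential surjectivity, I would observe that any representation is always a complete subrepresentation of itself (the definition of completeness is satisfied trivially by taking $k=1$). Thus, given $U\in\Rep_{\full}(A)$, the poset $\Comp(U)$ is nonempty and, by fullness, reduces to $\{U\}$; hence $U\in\cS$ with $U_s=U$, so $\Sigma(U)=U$ (the identity on $U$ witnessing essential surjectivity). I do not foresee any genuine obstacle: every step is a short consequence of results already proved, and the main subtlety - checking that $V_s$ is itself full - is the one non-mechanical point and is dispatched by transitivity of completeness.
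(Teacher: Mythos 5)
Your proof is correct and follows essentially the same approach as the paper. In fact, you make explicit a point that the paper dispatches with a terse ``by definition'': that $V_s$ is itself full requires the transitivity of completeness (Proposition \ref{prop:complete-rep}(3)) combined with minimality of $V_s$ in $V$, exactly as you argue.
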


\begin{proof}
By definition, the process $V\mapsto V_s$ is well-defined from the class $\cS$ to the class of full modules.
By (3) of Proposition \ref{prop:class-S} a morphism $\theta:V\to V'$ restricts into a morphism $\theta_s:V_s\to V_s'$.
We deduce that $\Sigma$ is indeed a well-defined functor.
Observe that $\Sigma$ restricts to the identity endofunctor of $\Rep_{\full}(A)$ implying it is essentially surjective.
\end{proof}

We note that $\Sigma$ is not faithful (i.e.~not injective on all morphism spaces) and thus does not define an equivalence of categories.
Indeed, take $V$ one-dimensional with $v\cdot e_i=0$ for all $1\leq i\leq n$ and $v\in V$. We have that $\End(V)\simeq \bk$ while $\End(\Sigma(V))=\End(0)\simeq \{0\}$.

\subsection{Degenerate representations}\label{sec:degenerate}
Recall the notations $(\pi,V)\in \Rep(A), \Phi=\Caret(V)$ of this section.
It may happen that the canonical map $j:V\to \Pi(V)$ is \emph{not injective}.
Even worse, we may have $V\neq \{0\}$ while $\Pi(V)=\{0\}$, e.g.~take any vector space $V$ and all the $e_i$ acting as a single nilpotent linear map. 
This is new to this algebraic framework and does not appear in the $C^*$-algebraic context of \cite{Brothier-Wijesena24b}. This causes some technicalities that we treat via the following notion.

\begin{definition}
A representation $V$ of $A$ is called \emph{nondegenerate} when $j:V\to \Pi(V)$ is injective. Otherwise, it is called \emph{degenerate}.
\end{definition}

\begin{proposition}\label{prop:degenerate}
The following assertions are true.
\begin{enumerate}
\item The spaces $\ker(\Phi(\Y))$ is the set of $v\in V$ satisfying $v\cdot e=0$ for all $e\in E^1.$
\item The $A$-module $\ker(j)$ is the set of $v\in V$ admitting a tree $t$ satisfying $\Phi(t)(v)=0$. 
Equivalently, $v\in \ker(j)$ if and only if there exists $k\geq 1$ satisfying $v\cdot p=0$ for all word $p$ with $|p|\geq k.$
\item The representation $V$ is nondegenerate if and only if $\Phi(\Y)$ is injective.
\item If $W\in \Rep(L)$, then $\Lambda(W)$ is nondegenerate.
\end{enumerate}
\end{proposition}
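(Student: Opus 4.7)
My plan is to handle the four items in order. Items (1), (3) and (4) are essentially formal consequences of the definitions and of the Leavitt relations; the real content lies in (2), which computes $\ker(j)$ via the direct-limit description of $W = \Pi(V)$.

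First, (1) is immediate from the formula $\Phi(\Y)(v) = (v\cdot e_1,\dots,v\cdot e_n)$: this vanishes iff $v\cdot e_i = 0$ for every $i$. For (2), I would use the presentation of $W$ as the direct limit of the system $\{V_t = V^{\Leaf(t)}\}_{t \in \cT}$ with connecting maps $\iota_t^{t\circ f}$ given by applying $\Phi(f)$. In any such direct limit, $[t,v] = 0$ iff there exists a forest $f$ with $\Phi(f)(v) = 0$; applied to $j(v) = [I,v]$, this yields the first description of $\ker(j)$ as the set of $v \in V$ admitting a tree $t$ with $\Phi(t)(v) = 0$. The equivalence with the word-length condition then hinges on a simple combinatorial fact: the leaves of a complete $n$-ary tree $t$ form a maximal antichain in the infinite $n$-ary tree, so any word $p$ whose length is at least the depth of $t$ factors as $p = \ell \cdot q$ through some leaf $\ell$ of $t$, whence $v \cdot p = (v \cdot \ell)\cdot q = 0$ as soon as $\Phi(t)(v) = 0$. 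Conversely, if $v \cdot p = 0$ for all words $p$ of length $\geq k$, take $t$ to be the full $n$-ary tree of depth $k$, whose leaves are exactly the length-$k$ words; then $\Phi(t)(v) = 0$ by inspection.

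For (3), one direction is immediate from (2) applied with $t = \Y$. For the converse, I would show by induction on trees that injectivity of $\Phi(\Y)$ implies injectivity of $\Phi(t)$ for every $t$: the base case $t = I$ is trivial, and for $t = \Y \circ (t_1,\dots,t_n)$ the contravariant monoidality of $\Caret(V)$ yields the factorisation
\[\Phi(t) = \bigl(\Phi(t_1)\oplus\dots\oplus \Phi(t_n)\bigr)\circ \Phi(\Y),\]
a composition of injections by the inductive hypothesis. Finally, for (4), the Leavitt relation $\sum_{i=1}^n e_i e_i^* = 1$ implies the identity $w = \sum_i (w \cdot e_i)\cdot e_i^*$ in any unital $L$-module, so $w \cdot e_i = 0$ for all $i$ forces $w = 0$; thus the map $\Phi(\Y)$ for $\Caret(\Lambda(W))$ is injective and nondegeneracy follows from (3). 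The one step that involves a genuine (if elementary) argument is the antichain observation in (2); everything else is either a direct rewriting of the definitions or a one-line consequence of the Leavitt relation.
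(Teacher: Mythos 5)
Your proposal is correct and matches the paper's proof in all essentials: (1) by reading off the definition of $\Phi(\Y)$, (2) via the direct-limit description and growing to the full tree of depth $k$, (3) by bootstrapping injectivity of $\Phi(\Y)$ to injectivity of $\Phi(t)$ for all $t$, and (4) by noting the Leavitt relation makes the caret map bijective and invoking (3). The only cosmetic difference is that in (3) the paper iterates on full trees $t_k$ while you induct on arbitrary trees using monoidality of $\Caret(V)$; both are the same argument.
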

\begin{proof}
Proof of (1). 
Since $\Phi(\Y)$ is the direct sum of the $v\mapsto v\cdot e$ over $e\in E^1$ we deduce the description of $\ker(\Phi(\Y)).$

Proof of (2).
The kernel of $j$ is a submodule of $V$ since $j:V\to \Lambda\circ\Pi(V)$ is a morphism of $A$-modules.
Consider $v\in V$.
If $j(v)=0$, then there exists a representative $(t,\Phi(t)(v))$ of $j(v)$ with all coordinates of $\Phi(t)(v)$ equal to zero. 
Up to growing $t$ we may assume that $t=t_k$ is the full tree with all leaves at distance $k$ from the root. 
We get that $\Phi(t_k)(v)=\oplus_p v\cdot p$ where the sum is over all path of length $k$.
The converse inclusions are obvious.

Proof of (3).
Assume now that $\ker(\Phi(\Y))=\{0\}$ and write $V_k$ for $\ker(\Phi(t_k)).$
If $v\in V_2$, then $\Phi(t_2)(v)=0$ for the full tree $t_2= \Y\circ (\Y\otimes\Y)$. 
Hence, the first and second coordinates of $\Phi(\Y)v$ are in $V_1 = \ker(\Phi(\Y))$ which is assumed to be trivial.
Therefore, $\Phi(\Y)v=0$, meaning $v\in V_1=\{0\}.$ 
By iterating this argument we deduce that $V_k=\{0\}$ for all $k\geq 1$. By (2) we have that $\ker(j)=\cup_{k\geq 1} V_k$ which is then trivial.

Proof of (4).
Observe that the caret map $\Psi(\Y)$ associated to $\Lambda(W)$ is a bijection from $W$ to $W^n$ with inverses $(w_1,\dots,w_n)\mapsto \sum_{i=1}^n w_i \cdot e_i^*$.
Hence, by (3) $\Lambda(W)$ is nondegenerate.
\end{proof}

\begin{remark}We have proven that if $\ker(\Phi(\Y))=\{0\}$, then $\ker(j)=\{0\}$.
This does not mean that $\ker(\Phi(\Y))=\ker(j)$ in general.
Indeed, take $n=2$, $V=\bk^2$ with basis $(E_1,E_2)$, and set $\pi(e_1)=\pi(e_2)=\begin{pmatrix} 0 & 1\\ 0& 0 \end{pmatrix}$.
We have that $\ker(\Phi(\Y))=\ker(\pi(e_1))= \bk\cdot E_1$ while $\ker(j)=V$.\end{remark}

We now define a (functorial) procedure transforming any $A$-module into a nondegenerate one while yielding an isomorphism between the associated $L$-modules.

\begin{proposition}\label{prop:degenerate-functor}
Consider $V\in \Rep(A)$ and the submodule $\ker(j_V)$.
The quotient map $V\onto V/\ker(j_V)=:\nabla(V)$ defines an isomorphism of $L$-modules $\Pi(V)\to\Pi(V/\ker(j_V))$.
Moreover, this defines an essentially surjective functor $\nabla:\Rep(A)\to\Rep^{\nd}(A)$ from $\Rep(A)$ to the full subcategory of \emph{nondegenerate} representations of $A$.
\end{proposition}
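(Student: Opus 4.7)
The plan is to proceed in the natural order: first establish the isomorphism $\Pi(V)\simeq \Pi(\nabla(V))$, then verify functoriality, nondegeneracy of the output, and finally essential surjectivity.

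First I would note that $\ker(j_V)$ is an $A$-submodule since $j_V:V\to \Lambda\circ\Pi(V)$ is an $A$-module morphism. Thus the quotient map $q:V\onto V/\ker(j_V)=:\nabla(V)$ is well-defined and $A$-equivariant. Applying $\Pi$ yields an $L$-module morphism $\Pi(q):\Pi(V)\to \Pi(\nabla(V))$ which at the level of classes is given by $[t,v_1,\dots,v_k]\mapsto [t,q(v_1),\dots,q(v_k)]$. Surjectivity is immediate since any $[t,\ov{v_1},\dots,\ov{v_k}]$ is the image of $[t,v_1,\dots,v_k]$ for any lifts.

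The heart of the argument\,---\,and the main obstacle\,---\,is the injectivity of $\Pi(q)$. I would use the characterization of vanishing in a direct limit: an element $[t,v]\in \Pi(V)$ equals zero iff there exists a forest $f$ with $\Phi(f)(v)=0$; equivalently, iff there exists $k\geq 1$ so that $v_\ell\cdot p=0$ for every leaf decoration $v_\ell$ and every word $p$ with $|p|\geq k$. Assume $\Pi(q)([t,v])=0$. Then there is a forest $f$ so that $\Phi'(f)(q(v))=0$ in $\nabla(V)$, where $\Phi'=\Caret(\nabla(V))$. Since $q\circ \Phi(f)=\Phi'(f)\circ q^{\oplus}$, this means every component of $\Phi(f)(v)$ lies in $\ker(j_V)$. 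By Proposition \ref{prop:degenerate}(2), for each leaf decoration $v_\ell'$ of $(t\circ f,\Phi(f)(v))$ there exists a tree $s_\ell$ satisfying $\Phi(s_\ell)(v_\ell')=0$. Concatenating them into a forest $g$ above $t\circ f$ yields $\Phi(f\circ g)(v)=0$, so $[t,v]=0$ in $\Pi(V)$. Hence $\Pi(q)$ is injective.

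Next I would verify functoriality. Given $\theta:V\to V'$ in $\Rep(A)$, a straightforward computation (using $A$-equivariance of $\theta$ and Proposition \ref{prop:degenerate}(2)) shows $\theta(\ker(j_V))\subset \ker(j_{V'})$, so $\theta$ descends to $\nabla(\theta):\nabla(V)\to \nabla(V')$. Compatibility with composition and identities is immediate from the universal property of the quotient, so $\nabla$ is a functor. To see that $\nabla(V)$ is nondegenerate, by Proposition \ref{prop:degenerate}(3) it suffices to show $\Phi'(\Y)$ is injective: if $q(v)\in \ker(\Phi'(\Y))$, then each $v\cdot e_i\in \ker(j_V)$ admits a tree $t_i$ killing it, so $\Phi(\Y\circ(t_1\ot\cdots\ot t_n))(v)=0$ and hence $v\in\ker(j_V)$, that is $q(v)=0$.

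Finally, essential surjectivity follows trivially: if $V\in \Rep^{\nd}(A)$, then $\ker(j_V)=\{0\}$ and $\nabla(V)=V$, so $\nabla$ restricts to the identity endofunctor of $\Rep^{\nd}(A)$.
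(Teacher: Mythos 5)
Your proposal is correct and follows essentially the same route as the paper's proof: reduce injectivity of $\Pi(q)$ to the characterization of $\ker(j_V)$ from Proposition \ref{prop:degenerate}(2), kill the offending coordinates with a suitable forest grown below $t\circ f$, and verify nondegeneracy and functoriality by the same kind of tree-growing argument. The only cosmetic differences are that you assemble per-leaf trees $s_\ell$ into a forest rather than taking a single common tree $s$, and you compose $f\circ g$ directly instead of first absorbing $f$ into $t$; both are equivalent.
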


\begin{proof}
Consider $j_V:V\to\Pi(V)$ and write $K$ for $\ker(j_V)$ and $q:V\onto V/K$ for the quotient map. 
This is a morphism of $A$-modules and thus the functor $\Pi$ defines a morphism of $L$-modules
$$\Pi(q):\Pi(V)\to \Pi(V/K).$$
Let us show that $Q:=\Pi(q)$ is an isomorphism.
Observe that $Q$ is defined as follows:
$$Q([t,v_1,\dots,v_m])=[t,q(v_1),\dots,q(v_m)]$$
where $t$ is a tree and the $v_i\in V$ are the decorations of its leaves.
We may write $v$ instead of $(v_1,\dots,v_m)$ and write $q(v)$ instead of $(q(v_1),\dots,q(v_m)).$
Assume that $Q([t,v])=0$.
Hence, $[t,q(v)]=0$ meaning that there exists a forest $f$ satisfying that $(t\circ f, \Phi_{V/K}(f)(q(v)))=(t\circ f, 0)$ (where $\Phi_{V/K}=\Caret(V/K)$).
Up to growing $t$ we may assume that $f$ is trivial and thus obtain that 
$q(v_i)=0$ for all $1\leq i\leq m$.
This means that $v_i$ is in $\ker(j_V)$ for all $i$.
Hence, there exists a tree $s$ satisfying $\Phi(s)(v_i)=0$ for all $i.$
Take now the forest $g:=(s,\dots,s)$ with $m$ roots and whose each tree is equal to $s$.
We obtain that $$[t,v]=[t\circ g,\Phi(g)(v)]=[t\circ g, \Phi(s)(v_1),\dots,\Phi(s)(v_m)]=[t\circ g, 0,\dots,0]=0.$$
This proves that $Q$ is injective.
Since $q:V\to V/K$ is surjective we immediately deduce that $\Pi(q):\Pi(V)\to\Pi(V/K)$ is surjective.

Let us check that $V/K$ is nondegenerate.
Consider $v+K\in V/K$ and assume that it is in the kernel of $\Phi_{V/K}(\Y)$.
Observe that 
$$\Phi_{V/K}(\Y)(v+K) = \Phi(\Y)(v)+K.$$
We deduce that $\Phi(\Y)v$ is in $K$. Hence, there exists a large enough tree $s$ so that $\Phi(s)(w_i)=0$ for all the coordinates of $w:=\Phi(\Y)v$.
Hence, $t:=\Y\circ(s,\dots,s)$ is a tree satisfying that $\Phi(t)v=0$ and thus $v$ is in $K:=\ker(j_V)$ by Proposition \ref{prop:degenerate}.
Therefore, $v+K$ is trivial in $V/K$ and $\Phi_{V/K}(\Y)$ is injective.
Using Proposition \ref{prop:degenerate} we conclude that $V/K$ is nondegenerate.

It remains to check that $\nabla$ is indeed a functor.
It is sufficient to show that if $\theta\in\Hom(V,V')$, then it defines a morphism $[\theta]:\Hom(V/\ker(j_V), V'/\ker(j_{V'}))$ using the universal property of the quotient.
This reduces to showing that given $v\in \ker(j_V)$, we have $\theta(v)\in\ker(j_{V'})$.
Take such a $v$. We will use the characterisation of $\ker(j_V)$ of Proposition \ref{prop:degenerate}.
There exists $k\geq 1$ such that $v\cdot p=0$ for all words $|p|\geq k$.
Now, $0=\theta(0)=\theta( v\cdot p)= \theta(v)\cdot p$. Using again Proposition \ref{prop:degenerate} we deduce that $\theta(v)\in \ker(j_{V'})$. 
\end{proof}

We now show that irreducibility implies non-degeneracy and fullness except in one case.

\begin{proposition} \label{prop:degenerate-rep}
An \emph{irreducible} representation $(\pi,V)\in\Rep(A)$ is nondegenerate and full unless $\dim(V)=1$ and $\pi(e_i)=0$ for all $1\leq i\leq n$.
In this latter case $V$ is both degenerate and non-full. 
\end{proposition}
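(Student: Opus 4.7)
The plan is to split on the submodule $V_0 := \ker(\Phi(\Y))$, which by Proposition \ref{prop:degenerate}(1) equals $\{v \in V : v \cdot e_i = 0 \text{ for all } 1\leq i\leq n\}$. First I would verify that $V_0$ is actually an $A$-submodule of $V$: any monomial of positive degree in the $e_i$ applied to an element of $V_0$ already vanishes at its leftmost letter, while the constant part preserves $V_0$, so $V_0 \cdot A \subseteq V_0$. Irreducibility of $V$ then forces $V_0 = 0$ or $V_0 = V$, and I will handle these two cases separately.

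In the case $V_0 = 0$, Proposition \ref{prop:degenerate}(3) immediately yields nondegeneracy of $V$. For fullness, suppose $V_c \subseteq V$ is a complete subrepresentation. If $V_c = 0$, then by Definition \ref{def:complete-subrep} every $v \in V$ admits $k \geq 1$ with $v\cdot p = 0$ for all paths $p$ of length $\geq k$; Proposition \ref{prop:degenerate}(2) then places $V$ inside $\ker(j_V)$, contradicting nondegeneracy of the nonzero irreducible module $V$. Hence $V_c$ is a nonzero subrepresentation of the irreducible $V$, so $V_c = V$, proving $V$ is full.

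In the case $V_0 = V$, every $\pi(e_i)$ vanishes, so the $A$-action factors through the augmentation $A \to \bk$ sending each $e_i$ to $0$. Every vector subspace of $V$ is then an $A$-submodule, and irreducibility forces $\dim V = 1$. For this one-dimensional module, $\Phi(\Y) = 0$ is not injective, which gives degeneracy by Proposition \ref{prop:degenerate}(3); and $\{0\} \subsetneq V$ is a proper complete subrepresentation (take $k=1$ in Definition \ref{def:complete-subrep} and use $v \cdot e_i = 0$), so $V$ is not full.

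The only step requiring any care is checking that $V_0$ is closed under the $A$-action, which is elementary thanks to $A$ being free; everything else is a direct application of the characterisations already recorded in Proposition \ref{prop:degenerate}.
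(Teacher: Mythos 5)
Your proof is correct and follows essentially the same route as the paper's: both arguments hinge on observing that $V_0 = \ker(\Phi(\Y))$ is an $A$-submodule, invoke irreducibility to force $V_0 = 0$ or $V_0 = V$, and use parts (2) and (3) of Proposition \ref{prop:degenerate} together with the fact that a zero complete submodule would place $V$ inside $\ker(j_V)$. The only difference is organizational — you split into the two cases up front, whereas the paper first derives the exceptional form under the assumption of degeneracy and then treats fullness for a general irreducible module afterward — but the content is the same.
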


\begin{proof}
Consider an irreducible degenerate representation $(\pi,V)$ of $A$.
Consider the submodule $\ker(\Phi(\Y))$ where $\Phi:=\Caret(V)$.
Since $V$ is irreducible we have that $\ker(\Phi(\Y))=\{0\}$ or $V$.
Degeneracy and Proposition \ref{prop:degenerate} imply that $\ker(\Phi(\Y))\neq\{0\}$.
Hence, $\Phi(\Y)$ is the zero map, i.e.~$\pi(e_i)=0$ for all $i$.
This implies that any vector subspace of $V$ is closed under the $\pi(e_i)$ and thus forms a $A$-submodule.
Irreducibility forces to have $\dim(V)=0$ or $1$.
Degeneracy implies $V\neq\{0\}$ and thus $\dim(V)=1.$
Conversely, if $\dim(V)=1$ and $\pi(e_i)=0$ for all $i$, we have that $(\pi,V)$ is irreducible and degenerate.

Consider now any irreducible representation $V'$.
If $V_c'\subset V'$ is a complete submodule, then $V_c'=\{0\}$ or $V_c'=V'$ by irreducibility.
If $V_c'=\{0\}$, then we are in the previous case. If $V_c'=V'$ for all complete $V_c'$, then the representation is full.
\end{proof}

\subsection{Connections between fullness and nondegeneracy} \label{subsec:full-nondeg}

Since $\Rep(A)$ is not semisimple (i.e.~there exist representations that are not direct sums of irreducible ones) we cannot expect that  fullness implies nondegenerate or vice-versa as illustrated below.

\begin{example}
Here is a full module that is degenerate.
Take any field $\bk$, fix $n=2$ and $V=\bk^2$ with standard basis $E_1,E_2$. 
Consider the $A$-module structure given by 
$$\pi(e_1):=\begin{pmatrix} 0 & 1\\ 0&1\end{pmatrix} \text{ and } \pi(e_2):=\begin{pmatrix} 0 & 1\\ 0&0\end{pmatrix}.$$
We have that $\ker(\pi(e_1))\cap \ker(\pi(e_2))=\bk E_1$ implying that $V$ is degenerate.
However, there is no proper complete submodule implying that $V$ is full.

Here is a nondegenerate module that is not full.
Take again any field $\bk$, $n=2$ and now take $V=\bk^3$.
Define 
$$\pi(e_1):=\begin{pmatrix} 1 & 1 & 0 \\ 0 & 0 & 1 \\ 0 & 0 & 0 \end{pmatrix} \text{ and } \pi(e_2):=\begin{pmatrix} 1 & 0 & 0 \\ 0 & 0 & 1 \\ 0 & 0 & 0 \end{pmatrix}.$$
Note that $\bk E_1$ is a complete submodule and thus $V$ is not full.
However, $\ker(\pi(e_1))=\bk (E_1-E_2)$ while $\ker(\pi(e_2))=\bk E_1$. Therefore, $\ker(\pi(e_1))\cap \ker(\pi(e_2))=\{0\}$ implying that $V$ is nondegenerate by Proposition \ref{prop:degenerate}.
\end{example}

We have defined two functors $\nabla$ and $\Sigma$ that transform $A$-modules into nondegenerate and full ones, respectively.
These two processes functorialy commute. Hence, applying the two functors in any order provides a nondegenerate full module that is invariant under $\nabla$ and $\Sigma$.
More precisely, if $V\in \cS$, then $j_V(\Sigma(V))
\simeq \nabla\circ\Sigma(V)\simeq \Sigma\circ\nabla(V)$ and these isomorphisms are natural.

\section{Classification of representations of the Leavitt algebra}

We now prove our main results in the base case of the Leavitt algebra.

\subsection{An equivalence of categories}

\begin{theorem}\label{theo:main}
Consider $\Rep_{\full}^{\nd}(A)$ the category of full and nondegenerate representations of $A$ and let $\Rep_{\cS}(L)$ be the full subcategory of $\Rep(L)$ with class of objects $\{\Pi(V):\ V\in\cS\}$.
The functors
$$\Pi_{\full}^{\nd}:\Rep_{\full}^{\nd}(A)\to \Rep_{\cS}(L) \text{ and } \Sigma\circ\Lambda_\cS:\Rep_{\cS}(L)\to \Rep_{\full}^{\nd}(A)$$
define an equivalence of categories. 
\end{theorem}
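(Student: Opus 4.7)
The plan is to verify that the two stated functors land in the correct codomain categories, then construct unit and counit natural isomorphisms and check that they are mutually inverse. Almost all of the substantive work is already done in the preparatory material, so the theorem should be largely an assembly exercise. The conceptual core is Proposition~\ref{prop:class-S}(4), which packages the fact that $\cS$ is closed under $\Lambda\circ\Pi$ and identifies $\Sigma\circ\Lambda\circ\Pi(V)$ with $j_V(V_s)$.

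First I would verify well-definedness of the functors. For $\Pi^{\nd}_{\full}$: a full $A$-module $V$ has $V$ itself as its unique (hence smallest) complete submodule, so $V\in\cS$ with $V_s=V$; thus $\Pi(V)\in\Rep_\cS(L)$ by definition. For $\Sigma\circ\Lambda_\cS$: if $W=\Pi(V)$ with $V\in\cS$, Proposition~\ref{prop:class-S}(4) shows $\Lambda(W)\in\cS$ with smallest complete submodule $j_V(V_s)$, so $\Sigma\circ\Lambda(W)=j_V(V_s)$. This module is full, since any complete submodule of $j_V(V_s)$ would also be complete in $\Lambda(W)$ by Proposition~\ref{prop:complete-rep}(3) and hence contain $j_V(V_s)$ by minimality; it is nondegenerate, since $\Lambda(W)$ is nondegenerate by Proposition~\ref{prop:degenerate}(4) and the characterisation of $\ker(j)$ in Proposition~\ref{prop:degenerate}(2) shows nondegeneracy is inherited by submodules.

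Next I would define the unit $\eta_V:V\to\Sigma\circ\Lambda\circ\Pi(V)=j_V(V)$ as the canonical map $j_V$ corestricted to its image. Nondegeneracy of $V$ makes $\eta_V$ an $A$-module isomorphism, and the already-stated naturality of the family $(j_V)_{V\in\Rep(A)}$ supplies the naturality of $\eta$. For the counit at $W=\Pi(V)\in\Rep_\cS(L)$, the inclusion $j_V(V_s)\hookrightarrow\Lambda(\Pi(V))$ is complete, so by Observation~\ref{obs:complete} it induces an $L$-module isomorphism $\Pi(j_V(V_s))\xrightarrow{\sim}\Pi(\Lambda(W))$; composing with $j_W^{-1}$ from Proposition~\ref{prop:jones-functor-surj} yields $\varepsilon_W:\Pi\circ\Sigma\circ\Lambda(W)\xrightarrow{\sim}W$.

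The main delicate point, and where the argument most needs care, is naturality of $\varepsilon$. For an $L$-linear $\Theta:W\to W'$ between objects of $\Rep_\cS(L)$ with $W=\Pi(V)$ and $W'=\Pi(V')$, one must check that the $A$-linear map $\Lambda(\Theta)$ sends $j_V(V_s)$ into $j_{V'}(V'_s)$, so that restriction to smallest complete submodules makes sense and the resulting square commutes. This is precisely Proposition~\ref{prop:class-S}(3) applied to $\Lambda(\Theta)$ viewed as a morphism between objects of $\cS$. With this in hand, the required triangle identities relating $\eta$ and $\varepsilon$ reduce to diagram chases whose cells commute by naturality of $j$ in both Proposition~\ref{prop:jones-functor-surj} and the remark immediately preceding it, together with the defining factorisation of $\varepsilon_W$ through $j_W^{-1}$. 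None of these steps requires genuinely new input beyond the preparatory propositions, so the theorem follows by collating them.
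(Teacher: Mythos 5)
Your proof is correct and takes essentially the same route as the paper: both show that $\Sigma\circ\Lambda\circ\Pi\simeq\id$ on full nondegenerate $A$-modules via the (corestricted) canonical map $j_V$, and that $\Pi\circ\Sigma\circ\Lambda\simeq\id$ on $\Rep_\cS(L)$ via Observation~\ref{obs:complete} and Proposition~\ref{prop:jones-functor-surj}, with Proposition~\ref{prop:class-S}(3)--(4) supplying well-definedness and naturality. The paper additionally gives a second, self-contained argument that $\Pi^{\nd}_{\full}$ is fully faithful, which your write-up omits but does not need; your extra remark about triangle identities is likewise dispensable since mutually inverse natural isomorphisms already constitute an equivalence.
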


\begin{proof}
We start by explaining why $\Sigma\circ \Lambda_\cS$ is well-defined.
Recall that $\Lambda_\cS$ is valued into \emph{nondegenerate} modules by Proposition \ref{prop:degenerate}.
Now, $W\in\Rep_\cS(L)$ means that $W\simeq\Pi(V)$ for some $V\in\cS$ and that $j_V(V_s)\subset \Lambda(W)$ is the smallest complete submodule (and thus exists) by Proposition \ref{prop:class-S}. 
It is clear that $j_V(V_s)$ is full by minimality of $V_s$. Moreover, it is nondegenerate since it is a submodule of a nondegenerate one. Hence, $\Sigma\circ\Lambda_\cS$ is indeed well-defined satisfying $\Sigma\circ\Lambda_\cS(V)\simeq j_V(V_s).$

We consider the compositions 
$$F:=\Sigma\circ \Lambda_\cS\circ\Pi^{\nd}_{\full} \text{ and } G:=\Pi^{\nd}_{\full}\circ\Sigma\circ \Lambda_\cS.$$
Given $V$ full and nondegenerate, we have that $F(V)$ is isomorphic to $j_V(V_s)$. Since $V$ is nondegenerate $j_V$ is injective and since $V$ is full $V_s$ is equal to $V$. Hence, $F(V)\simeq V$ and this isomorphism is natural.
Conversely, consider $W\in \Rep_\cS(L)$. There exists $V\in\cS$ so that $W\simeq \Pi(V)$.
Then, $\Sigma\circ\Lambda(W)=j_V(V_s)$ which is isomorphic to the quotient module $V_0:=V_s/(\ker(j_V)\cap V_s)$.
Then $\Pi(V_0)\simeq \Pi(V_s)\simeq \Pi(V)$ by Propositions \ref{prop:degenerate-functor} and \ref{prop:complete-rep}.
Hence, $G(W)\simeq W$ and one can check that this defines a natural isomorphism (between the identity functor and $G$).
All together we have proven that $\Pi_{\full}^{\nd}$ and $\Sigma\circ \Lambda$ define an equivalence of categories. This finishes the proof of the Theorem.

For the sake of clarity we give a different proof showing that $\Pi^{\nd}_{\full}$ is fully faithful.
For each pair $(V,V')$ full and nondegenerate we have a mapping 
$$\Pi_{V,V'}:\Hom(V,V')\to \Hom(\Pi(V),\Pi(V')), \theta\mapsto \Pi(\theta).$$
We have the following description:
$$\Pi(\theta) [t,v_1,\dots,v_m]= [t,\theta(v_1),\dots,\theta(v_m)]$$
where $t$ is a tree with $m$ leaves and $v_i$ is the decoration of the $i$th leaf of $t$.
In particular, $\Pi(\theta)[I,v]=[I,\theta(v)]$.
Since $V,V'$ are nondegenerate we have that $j:V\to \Pi(V)$ and $j':V'\to\Pi(V')$ are injective.
We then deduce that $\Pi(\theta)\restriction_{j(V)}=\theta$ up to identifying $V$ with $j(V)$ and $V'$ with $j'(V')$.
This proves that $\Pi_{V,V'}$ is injective.
Consider now a morphism of $L$-modules $\alpha:\Pi(V)\to\Pi(V')$.
Since $V,V'$ are in $\cS$ we have that $\Lambda\circ\Pi(V),\Lambda\circ\Pi(V')$ are in $\cS$ too with smallest submodules $j(V_s),j'(V'_s)$ by (4) of Proposition \ref{prop:class-S}.
By (3) of the same proposition $\alpha$ restricts into a morphism of $A$-modules $\alpha\restriction:j(V)\to j'(V')$.
Since $V,V'$ are full and nondegenerate we have $V=V_s\simeq j(V_s)$ and $V'=V'_s\simeq j'(V'_s)$.
Hence, we deduce a morphism of $A$-modules $\theta:V\to V'$ satisfying that 
$$\alpha \left([I,v]\right)=[I,\theta(v)].$$
Using that $\alpha$ is an $L$-equivariant mapping, it can be shown that $\alpha = \Pi(\theta)$. This proves that $\Pi_{V,V'}$ is surjective, giving all together that $\Pi_{\full}^{\nd}$ is fully faithful.
\end{proof}

\subsection{Irreducibility and indecomposability}

Theorem \ref{theo:main} permits to decide if two representations of $L$ are equivalent or not solely using representations of $A$. 
We now prove $\Pi$ preserves irreducibility and under certain conditions indecomposability. This is not implied by the preceding theorem since we are working in non-semisimple categories where Schur lemma has no converse.
The nerve of the argument is contained in the following elementary lemma. Note, in the analytical situation of the Cuntz algebra this lemma was a complete surprise and much harder to obtain, see \cite[Theorem 4.4]{Brothier-Wijesena24b}.

\begin{lemma}\label{lem:L-submodule}
Consider a representation $W\in \Rep(L)$ that we identify with $\Pi(V)$ for a suitable $V\in \Rep(A)$.
Let $W_0\subset W$ be a nonzero $L$-submodule.
Then we have that $j_V(V)\cap W_0$ is a nonzero $A$-submodule of $j_V(V)$.
\end{lemma}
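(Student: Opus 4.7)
The plan is to take any nonzero $w\in W_0$, write it as $w=[t,v]$ for some tree $t$ with decoration $v=(v_\ell)_{\ell\in\Leaf(t)}$, and then use the $L$-action to extract elements of $j_V(V)$ sitting inside $W_0$. Recall from the definition of the $L$-action on $\Pi(V)$ that snipping $t$ along the path $p_\ell=e_{i_1}\cdots e_{i_k}$ corresponding to a leaf $\ell$ yields
\[w\cdot p_\ell \;=\; [I,v_\ell] \;=\; j_V(v_\ell).\]
Since $W_0$ is an $L$-submodule, each $j_V(v_\ell)$ lies in $W_0\cap j_V(V)$. So the only task is to show that at least one of these is nonzero.

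I would argue this by contradiction. Suppose $j_V(v_\ell)=0$ for every $\ell\in\Leaf(t)$, i.e.\ each $v_\ell\in\ker(j_V)$. By Proposition \ref{prop:degenerate} (2), for each $\ell$ there exists a tree $s_\ell$ such that $\Phi(s_\ell)(v_\ell)=0$. Forming the forest $f:=s_{\ell_1}\otimes\cdots\otimes s_{\ell_m}$ (where $\ell_1,\dots,\ell_m$ enumerate the leaves of $t$ from left to right), the monoidality of $\Phi=\Caret(V)$ gives $\Phi(f)(v)=0$. By the defining equivalence relation on $\Pi(V)$,
\[w \;=\; [t,v] \;=\; [t\circ f,\Phi(f)(v)] \;=\; [t\circ f,0] \;=\; 0,\]
contradicting $w\neq 0$. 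Hence some $v_\ell$ satisfies $j_V(v_\ell)\neq 0$, producing the desired nonzero element of $W_0\cap j_V(V)$.

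Finally, $W_0\cap j_V(V)$ is automatically an $A$-submodule of $j_V(V)$: the canonical map $j_V\colon V\to\Lambda\circ\Pi(V)$ is an $A$-module morphism so $j_V(V)$ is an $A$-submodule, and $W_0$ is an $L$-submodule hence an $A$-submodule via $\iota\colon A\to L$; the intersection of two $A$-submodules of $\Lambda(W)$ is an $A$-submodule.

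There is no real obstacle here; the statement is purely a compatibility between the $L$-action (which can snip to the trivial tree) and the characterisation of $\ker(j_V)$ from Proposition \ref{prop:degenerate}. The only subtlety worth naming is that in a degenerate situation, some decorations $v_\ell$ may vanish after applying $j_V$, which is precisely why growing the tree via the forest $f$ and reaching the zero decoration is needed to deliver the contradiction.
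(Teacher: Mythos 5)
Your proof is correct and follows essentially the same route as the paper: snip the tree at a leaf to land in $j_V(V)\cap W_0$, and observe that not all leaf decorations can lie in $\ker(j_V)$, else $[t,v]=0$. You have simply spelled out (via the contradiction with the forest $f=s_{\ell_1}\otimes\cdots\otimes s_{\ell_m}$) the step that the paper asserts without elaboration, namely that a nonzero $[t,v]$ must have some $v_\ell\notin\ker(j_V)$.
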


\begin{proof}
Consider $W\in \Rep(L)$ with $V$ as above. 
Let $W_0\subset W$ be a nonzero $L$-submodule.
By assumption there exists $[t,v]\in W_0$ nonzero. This means that there exists a leaf $\ell$ of $t$ such that the component $v_\ell$ is nonzero and moreover $\Phi(s)(v_\ell)\neq 0$ for any tree $s$.
Letting $p$ be the path corresponding to $\ell$ we obtain $ [t,v]\cdot p=[I,v_\ell]=j_V(v_\ell)$ which is nonzero and which is in $j_V(V)\cap W_0.$
\end{proof}

A representation is $(\pi,V)$ is called nonzero if $\pi(e_i)\neq 0$ for at least one generator $e_i.$

\begin{theorem}\label{theo:irred}
If $V$ is an irreducible (resp.~indecomposable, nondegenerate and full) representation of $A$, then $\Pi(V)$ is an irreducible (resp.~indecomposable) representation of $L$.
Moreover, if $V,V'\in \Rep(A)$ are irreducible and nonzero (resp.~indecomposable, nondegenerate and full), then $V\simeq V'$ if and only if $\Pi(V)\simeq \Pi(V')$.
\end{theorem}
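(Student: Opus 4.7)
The plan is to split the theorem into three independent assertions and handle them with different tools. For irreducibility I would use Lemma~\ref{lem:L-submodule} directly, while both the indecomposability statement and the isomorphism criterion will follow formally from the fully faithfulness of $\Pi^{\nd}_{\full}:\Rep^{\nd}_{\full}(A)\to \Rep_\cS(L)$ already established in Theorem~\ref{theo:main}.

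For irreducibility, I would take $V$ irreducible and a nonzero $L$-submodule $W_0\subseteq \Pi(V)$, aiming to show $W_0=\Pi(V)$. Proposition~\ref{prop:degenerate-rep} splits $V$ into two subcases: either $V$ is nondegenerate and full, or $\dim(V)=1$ with all $\pi(e_i)=0$. In the latter ``bad'' case a short direct calculation in the equivalence relation defining $\Pi(V)$ yields $\Pi(V)=0$, so the claim holds vacuously. In the former case $j_V$ is injective, so $j_V(V)\simeq V$ is an irreducible $A$-submodule of $\Lambda(\Pi(V))$. Lemma~\ref{lem:L-submodule} then provides a nonzero $A$-submodule $W_0\cap j_V(V)\subseteq j_V(V)$, which by irreducibility must equal $j_V(V)$. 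Fullness of $V$ combined with Observation~\ref{obs:complete} forces the $L$-submodule generated by $j_V(V)$ to be all of $\Pi(V)$, giving $W_0=\Pi(V)$.

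For the indecomposability claim and the isomorphism criterion I would invoke the fully faithful functor from Theorem~\ref{theo:main}. In both scenarios of the theorem, Proposition~\ref{prop:degenerate-rep} places $V$ and $V'$ in $\Rep^{\nd}_{\full}(A)$; since $\Rep_\cS(L)$ is a full subcategory of $\Rep(L)$, the functor $\Pi^{\nd}_{\full}$ induces a ring isomorphism $\End_A(V)\simeq \End_L(\Pi(V))$ and a bijection on hom-sets. An object in an additive category is indecomposable exactly when its endomorphism ring has no nontrivial idempotents, so a hypothetical nontrivial splitting $\Pi(V)=W_1\oplus W_2$ in $\Rep(L)$ would produce a nontrivial idempotent in $\End_L(\Pi(V))$ and hence, via the above ring isomorphism, a nontrivial splitting of $V$ in $\Rep(A)$, contradicting the indecomposability of $V$. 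For the isomorphism criterion, the forward direction is functoriality of $\Pi$, while the converse lifts any $L$-module isomorphism $\Pi(V)\simeq \Pi(V')$ to an $A$-module isomorphism by composing the hom-set bijections in both directions.

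The main obstacle I anticipate is really only category-theoretic book-keeping: one must verify that decompositions and isomorphisms of $\Pi(V)$ taken in the ambient category $\Rep(L)$ correspond faithfully to data in the smaller $\Rep_\cS(L)$ where the equivalence is formulated. This is automatic because hom-sets are preserved under passage to full subcategories, but it is the conceptual point where the argument has to be organised carefully. A secondary subtlety is the degenerate one-dimensional irreducible where $\Pi(V)=0$, which must be flagged and handled as a trivial case separately from the main argument.
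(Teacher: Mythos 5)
Your irreducibility argument and the ``moreover'' classification step are essentially those of the paper: Lemma~\ref{lem:L-submodule} feeds an irreducible $j_V(V)$ into a nonzero $L$-submodule, and the isomorphism criterion is read off from the fully faithfulness of $\Pi^{\nd}_{\full}$ together with Proposition~\ref{prop:degenerate-rep}. (One small inaccuracy: in your irreducibility step you credit fullness of $V$ for the fact that $j_V(V)$ generates $\Pi(V)$, but this holds for every $V$ because $V\subset V$ is trivially complete, so Observation~\ref{obs:complete} already gives it; the paper uses that observation directly, and you could drop the fullness remark without loss.)

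Your indecomposability argument is a genuinely different route. The paper argues directly: given $\Pi(V)=W_1\oplus W_2$, it sets $V_i=V\cap W_i$, shows each $V_i$ generates $W_i$ as an $L$-module, deduces $V_1\oplus V_2$ is complete in $V$, and then fullness forces $V=V_1\oplus V_2$; indecomposability of $V$ kills one summand, and Lemma~\ref{lem:L-submodule} finishes. You instead invoke the ring isomorphism $\End_A(V)\simeq\End_L(\Pi(V))$ coming from full faithfulness, and use the standard fact that in a module category (where idempotents split) an object is indecomposable exactly when its endomorphism ring has no nontrivial idempotents. Your route is slicker and purely formal; it trades the explicit intersection computation of the paper for the categorical idempotent criterion. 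It does crucially use that $\Rep_\cS(L)$ is a \emph{full} subcategory of $\Rep(L)$\,---\,which you correctly flagged\,---\,so that a direct-sum decomposition of $\Pi(V)$ in $\Rep(L)$ really does correspond to an idempotent seen by the equivalence. The paper's hands-on argument has the minor advantage of not depending on Theorem~\ref{theo:main}, making the indecomposability result self-contained modulo Lemma~\ref{lem:L-submodule}; yours is shorter once the equivalence of categories is on the table. Both are correct.
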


\begin{proof}
Consider an irreducible $V\in\Rep(A)$ and write $W$ for the $L$-module $\Pi(V)$.
If $W=\{0\}$, then it is irreducible.
Assume $W\neq\{0\}$ and fix a nonzero $L$-submodule $W_0\subset W$.
By Lemma \ref{lem:L-submodule} we have that $W_0\cap j_V(V)\subset j_V(V)$ is a nonzero $A$-submodule.
Since $V$ is irreducible so is $j_V(V)$ and thus $W_0\cap j_V(V)=j_V(V)$.
Since $j_V(V)\subset W$ is complete we deduce that $W_0=W$, thus $W$ is irreducible.

Assume now that $V\in\Rep(A)$ is indecomposable, nondegenerate and full.
We set $W:=\Pi(V)$ and assume that it decomposes as an $L$-module into $W=W_1\oplus W_2$.
Since $V$ is nondegenerate we may identify $V$ with its image $j_V(V)$ inside $W$.
Observe that $V_i:=V\cap W_i$ generates $W_i$ as an $L$-module for $i=1,2$.
Indeed, given $[t,v]\in W_i$ we have $V_i\ni [I,v_\ell]= [t,v]\cdot e_\ell$ for all leaf $\ell$ of $t$ and thus $[t,v]=\sum_{\ell} [I,v_\ell]\cdot e_\ell^*$.
Since $W_1\cap W_2=\{0\}$ we obtain that the $V_i$ are in direct sum inside $V$. 
Moreover, $V_1\oplus V_2$ is complete since it generates $W_1\oplus W_2=W$.
Fullness of $V$ implies the equality $V=V_1\oplus V_2$.
Since $V$ is indecomposable we deduce that one of the $V_i$ is trivial, say $V_2$.
Lemma \ref{lem:L-submodule} implies that $W_2=\{0\}$ and thus $W$ is indecomposable.

The second statement is trivially implied by Theorem \ref{theo:main} and Proposition \ref{prop:degenerate-rep}.
\end{proof}

The proof of the Theorem of above implies the following.

\begin{corollary}
For $V\in\Rep(A)$ the $L$-module $\Pi(V)$ is irreducible if and only if all $A$-submodules of $V$ are either complete or contained in $\ker(j_V)$.
\end{corollary}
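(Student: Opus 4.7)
The plan is to exploit two tools already in place: Observation \ref{obs:complete}, which reformulates completeness of $V_0 \subset V$ as ``$j_V(V_0)$ generates the $L$-module $W := \Pi(V)$'', and Lemma \ref{lem:L-submodule}, which states that every nonzero $L$-submodule $W_0 \subset W$ intersects $j_V(V)$ nontrivially.

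For the forward direction, I would assume $W$ is irreducible and take an arbitrary $A$-submodule $V_0 \subset V$. The natural move is to form the $L$-submodule $W_0 \subset W$ generated by $j_V(V_0)$. Irreducibility forces either $W_0 = \{0\}$ or $W_0 = W$. In the first case, $j_V(V_0) = \{0\}$, hence $V_0 \subset \ker(j_V)$. In the second case, Observation \ref{obs:complete} identifies $V_0$ as complete. This gives exactly the dichotomy claimed.

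For the converse, I would take any nonzero $L$-submodule $W_0 \subset W$ and pull it back: set $V_0 := j_V^{-1}(W_0)$, which is an $A$-submodule of $V$ because $j_V$ is $A$-equivariant. By Lemma \ref{lem:L-submodule}, $W_0 \cap j_V(V) \neq \{0\}$, so $V_0$ contains an element not in $\ker(j_V)$ and therefore is \emph{not} contained in $\ker(j_V)$. The hypothesis then compels $V_0$ to be complete. Applying Observation \ref{obs:complete}, the $L$-module generated by $j_V(V_0)$ is all of $W$; but $j_V(V_0) \subset W_0$ by construction, so $W \subset W_0$, i.e., $W_0 = W$. Hence $W$ is irreducible.

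I do not anticipate a genuine obstacle, as the corollary is essentially a distillation of the proof of Theorem \ref{theo:irred}: Lemma \ref{lem:L-submodule} is the substantive ingredient, and Observation \ref{obs:complete} supplies the translation between the two sides. The only minor point to watch is the degenerate edge case $W = \{0\}$ (where $V = \ker(j_V)$), in which both conditions hold vacuously and the equivalence is trivially true.
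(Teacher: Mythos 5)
Your proof is correct and uses exactly the tools the paper intends: Lemma \ref{lem:L-submodule} for the substantive (``only if'' of irreducibility) step and Observation \ref{obs:complete} to translate between completeness and generation of $W$. The paper states the corollary follows from the proof of Theorem \ref{theo:irred}, and your argument is precisely that proof distilled and extended to cover both implications, so the routes coincide.
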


\begin{remark}
Any $L$-module $W$ is isomorphic to $\Pi(V)$ for some $V\in\Rep(A)$. 
Moreover, $W \in \Rep_{\cS}(L)$ if and only if $V \in \cS$.
Additionally, if $V$ is irreducible, then $\Pi(V)$ is irreducible.
Finally, if $W\in \Rep_\cS(L)$ is irreducible, then $V:=\Sigma(\Lambda(W))$ is an irreducible $A$-module satisfying $\Pi(V)\simeq W.$
Though, this does not mean that all irreducible $L$-modules can be constructed from an irreducible $A$-module.
Indeed, consider the irreducible Chen module $F_p$ for some irrational ray $p$ (see Section \ref{sec:Chen-non-S}).
We have constructed a reducible $A$-module $C_p$ satisfying $\Pi(C_p)\simeq F_p$ and shown that $C_p\notin \cS$, implying $F_p \notin \Rep_{\cS}(L)$. 
Thus, we deduce that if $V$ is an irreducible $A$-module, then $\Pi(V)\not\simeq F_p$. A similar argument holds when $F_p$ is replaced by any other irreducible $L$-module not in $\Rep_\cS(L).$
\end{remark}

\subsection{A novel dimension function}
We now define two dimension-functions: one for representation of $A$ and one for representations of $L$. 
They are both denoted by $\dim_A$, calling them \emph{$A$-dimensions}. They are valued in $\N\cup\{\infty,\varepsilon\}$ where $\varepsilon$ is a fixed symbol.
In what follows we write $\dim$ for the usual dimension of a $\bk$-vector space.
\begin{definition}
Consider $V\in \Rep(A)$ and $W\in \Rep(L)$.
We set
\begin{itemize}
\item $\dim_A(V)=\varepsilon$ when $V\notin\cS$ and $\dim(\nabla\circ\Sigma(V))$ otherwise;
\item $\dim_A(W)=\varepsilon$ if $W\not\simeq\Pi(V')$ for all $V'\in\cS$ and is otherwise equal to the infimum of $\dim(V')$ over all $V'\in\Rep(A)$ satisfying $\Pi(V')\simeq W$.
\end{itemize}
\end{definition}

In particular, $\dim_A$ restricted to $\Rep_\cS(A)$ is equal to the composition of functors $\dim\circ \nabla\circ \Sigma$ and thus is well-defined, invariant under isomorphisms, and additive.
We obtain a similar statement for the dimension function $\dim_A$ of $L$-modules since $\dim$ and $\Pi$ are functors. 
Note that the infimum giving $\dim_A(W)$ can be taken over all $A$-submodules of $\Lambda(W)$, making it an infimum on a \emph{set} rather than on a \emph{collection}.
We now relate the two dimension-functions. In particular, we give a very practical way for computing the $A$-dimension of an $L$-module.

\begin{theorem} \label{thm:l-dim}
We have the following equalities:
\begin{enumerate}
\item $\dim_A(V)=\dim_A(\Pi(V)) \text{ for all } V\in \Rep(A)$; and
\item $\dim_A(W)=\dim_A(\Lambda(W)) \text{ for all } W\in \Rep(L).$
\end{enumerate}
Additionally, $\dim_A:\Rep_\cS(A)\to\N\cup\{\infty\}$ is invariant under the endofunctors $\Sigma, \nabla, \Lambda\circ\Pi$ and $\dim_A:\Rep(L)\to \N\cup\{\infty,\varepsilon\}$ is invariant under $\Pi\circ \Lambda$.
In particular, if $V$ is full and nondegenerate, then $\dim_A(\Pi(V))$ is equal to the \emph{usual} dimension of $V$.
\end{theorem}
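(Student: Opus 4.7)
The plan is to first establish the invariance statements, then derive the two equalities (1) and (2), and finally note the ``In particular'' clause.

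For the invariances, I would start from the natural isomorphisms $j_V(V_s) \simeq \nabla\circ\Sigma(V) \simeq \Sigma\circ\nabla(V)$ for $V \in \cS$ recalled in Subsection \ref{subsec:full-nondeg}, which give $\dim_A(V) = \dim(j_V(V_s))$. Invariance under $\Sigma$ is immediate from $\Sigma V = V_s$ being full, so $(\Sigma V)_s = V_s$. Invariance under $\nabla$ follows from the natural bijection between complete submodules of $\nabla V$ and those of $V$ containing $\ker j_V$, which identifies $(\nabla V)_s$ with $\nabla V_s$. Invariance under $\Lambda\circ\Pi$ is exactly Proposition \ref{prop:class-S}(4). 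On the $L$-module side, invariance under $\Pi\circ\Lambda$ is immediate from the natural isomorphism $\Pi\circ\Lambda(W) \simeq W$ of Proposition \ref{prop:jones-functor-surj}.

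I would prove equation (2) next. Observe that $\Lambda(W) \in \cS$ if and only if $W \in \Rep_\cS(L)$ (forward via $W \simeq \Pi\Lambda W$, reverse via Proposition \ref{prop:class-S}(4)), so the $\varepsilon$ cases match. Otherwise, since $\Lambda(W)$ is nondegenerate by Proposition \ref{prop:degenerate}(4), we have $\dim_A(\Lambda W) = \dim((\Lambda W)_s)$. For any $V' \in \Rep(A)$ with $\Pi V' \simeq W$, Proposition \ref{prop:complete-rep}(4) gives that $j_{V'}(V') \subset \Lambda W$ is complete, so $(\Lambda W)_s \subset j_{V'}(V')$ by minimality and $\dim(V') \geq \dim((\Lambda W)_s)$, with equality attained by taking $V' = (\Lambda W)_s$. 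Hence $\dim_A(W) = \dim((\Lambda W)_s) = \dim_A(\Lambda W)$. For equation (1) with $V \in \cS$: $\Pi V \in \Rep_\cS(L)$ since $\Pi V \simeq \Pi(\nabla\Sigma V)$ with $\nabla\Sigma V \in \cS$, and combining (2) with the $\Lambda\circ\Pi$-invariance gives $\dim_A(\Pi V) = \dim_A(\Lambda \Pi V) = \dim_A(V)$. For $V \notin \cS$, I would invoke the structural equivalence $V \in \cS$ iff $\Pi V \in \Rep_\cS(L)$ (as noted in the Remark following Theorem \ref{theo:irred}) to conclude both sides equal $\varepsilon$.

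The main obstacle is the nontrivial direction of this structural equivalence: given $\Lambda\Pi V \in \cS$ with smallest complete submodule $U \subset j_V(V)$, one must produce a smallest complete submodule of $V$. The difficulty is that $\ker j_V$ need not lie inside every complete submodule of $V$, so the preimage $j_V^{-1}(U)$ may fail to be minimal. A careful analysis distinguishing the cases $U = 0$ (where $V = \ker j_V$ is trivially in $\cS$ with $V_s = 0$) and $U \neq 0$ (requiring a lift argument using the fullness of $\bar U := (\nabla V)_s$ inside $\nabla V$) should settle this. Finally, the ``In particular'' clause is immediate from (1): if $V$ is full and nondegenerate, then $V_s = V$ and $\ker j_V = 0$, so $\dim_A(V) = \dim(V)$, and (1) yields $\dim_A(\Pi V) = \dim(V)$.
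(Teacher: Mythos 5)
Your proposal is correct on the non-$\varepsilon$ part of the theorem, and it actually takes a slightly different route from the paper's. You establish the invariance statements first (same source: the natural isomorphisms $\nabla\circ\Sigma\simeq\Sigma\circ\nabla\simeq\Sigma\circ\Lambda\circ\Pi$ from Section \ref{subsec:full-nondeg} and Proposition \ref{prop:class-S}), then prove equality (2) \emph{directly} from minimality: $j_{V'}(V')\subset\Lambda W$ is complete so $(\Lambda W)_s\subset j_{V'}(V')$, hence the infimum defining $\dim_A(W)$ equals $\dim((\Lambda W)_s)$; then (1) follows from (2) together with the $\Lambda\circ\Pi$-invariance. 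The paper proceeds in the opposite order: it proves (1) first by invoking Theorem \ref{theo:main} to conclude that any full nondegenerate $V_1$ with $\Pi(V_1)\simeq\Pi(V)$ is isomorphic to $\Sigma\circ\nabla(V)$, and then deduces (2) from (1). Your route is somewhat more elementary\,---\,it uses only the completeness and minimality propositions and never appeals to the equivalence of categories\,---\,and the two arguments trade off in a mild way: the paper leans on the uniqueness statement of Theorem \ref{theo:main}, while you lean on the observation that $\dim_A(W)$ can be computed by taking the smallest complete submodule of $\Lambda(W)$, which the paper also records as a remark just before the theorem.

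One point worth flagging: you explicitly identify that the $\varepsilon$ cases of (1) hinge on the implication $\Pi(V)\in\Rep_\cS(L)\Rightarrow V\in\cS$ (equivalently, $\nabla V\in\cS\Rightarrow V\in\cS$ after passing to $j_V(V)\subset\Lambda\Pi(V)$), and you correctly observe that the obstruction is that a complete submodule of $V$ need not contain $\ker j_V$, so the preimage $j_V^{-1}((\nabla V)_s)$ need not be minimal. Your proposed case split (the $U=0$ case is indeed trivial; the $U\neq 0$ case reduces to $\nabla V$ full) is the right reduction, but you leave the final lift argument unfinished. Note that the paper's own proof has the same feature: it only treats $V\in\Rep_\cS(A)$, and the $\varepsilon$ case is covered by the assertion in the Remark following Theorem \ref{theo:irred} (``$W\in\Rep_\cS(L)$ if and only if $V\in\cS$''), which is stated there without proof. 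So you have not lost anything relative to the paper; you have merely made explicit a dependence that the paper leaves tacit. Everything else\,---\,the use of Proposition \ref{prop:degenerate}(4) to pass from $\dim_A(\Lambda W)$ to $\dim((\Lambda W)_s)$, the observation that the infimum may be restricted to full nondegenerate modules, and the final ``in particular'' clause\,---\,matches the paper's reasoning.
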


\begin{proof}
Consider $V\in\Rep_\cS(A)$.
From Section \ref{subsec:full-nondeg} we have that $\nabla\circ\Sigma(V)\simeq \Sigma\circ\nabla(V)\simeq \Sigma\circ\Lambda\circ\Pi(V).$
This implies that $\dim_A:\Rep_\cS(A)\to\N\cup\{\infty\}$ is invariant under $\Sigma,\nabla$ and $\Lambda\circ\Pi$.
Moreover, we note that $\dim(\nabla\circ\Sigma(V))\leq \dim(V).$
Hence, when computing $\dim_A(W)$ it is sufficient to take the infimum only over full nondegenerate modules.

Fix $V\in \Rep_\cS(A)$ and let $V_1\in \Rep(A)$ be full nondegenerate satisfying $\Pi(V)\simeq \Pi(V_1).$
Note that $V_2:=\Sigma\circ\nabla(V)$ is full nondegenerate and $\Pi(V_2)\simeq\Pi(V_1)$.
Theorem \ref{theo:main} implies $V_2\simeq V_1$ and thus $\dim(V_2)=\dim(V_1).$
By the observation above we deduce that $\dim_A(\Pi(V_2))=\dim(V_2).$
Since $V_2$ is full and nondegenerate we have $\dim(V_2)=\dim_A(V_2)$ and since $\dim_A=\dim_A\circ \Sigma\circ\nabla$ we deduce $\dim_A(V_2)=\dim_A(V)$.
This proves the first statement:
$\dim_A(V)=\dim_A(\Pi(V)).$

Consider $W\in \Rep_\cS(L)$.
We have seen that $\Pi\circ\Lambda(W)\simeq W$ as $L$-modules.
Hence, the first statement applied to $\Lambda(W)$ gives $\dim_A(\Lambda(W))=\dim_A(\Pi\circ\Lambda(W))=\dim_A(W).$
\end{proof}

\subsection{Moduli spaces} \label{subsec:geom-irrep}

Let $\Irr(L)$ be the collection of irreducible classes of representation of $L$.
Using the dimension function $\dim_A$ we partition $\Irr(L)$ into $\sqcup_d \Irr(L)_d$ for $d\in\N\cup\{\infty,\varepsilon\}$ with $\Irr(L)_d$ being the irreducible classes of $A$-dimension $d$.
We now explain how for each finite $d$ the set $\Irr(L)_d$ is in bijection with a smooth variety, thus obtaining moduli spaces. 
To achieve this we make the following assumption on the field.

\begin{center}
{\bf In this section the field $\bk$ is algebraically closed and of characteristic zero.}
\end{center}

Given $d$ finite we write $\Irr(A)_d$ for the irreducible classes of representation of $A$ of \emph{usual} dimension $d$. 
Now, $\Pi$ is fully faithful and preserves irreducibility on full and nondegenerate representations.
Moreover, all classes in $\Irr(A)_d$ are full and nondegenerate except for one class $[\pi]$: when $d=1$ and $\pi(e)=0$ for all generator $e$.
We deduce that $\Irr(L)_d$ is in bijection with $\Irr(A)_d$ for $d\neq 1$ finite and that $\Irr(L)_1$ is in bijection with $\Irr(A)_1$ minus one point.
It is thus sufficient to find moduli spaces for the $\Irr(A)_d.$

For $d=1$, $\pi\mapsto (\pi(e_1),\dots,\pi(e_n))$ provides a bijection from $\Irr(A)_1$ to $\bk^n$ and then $\Irr(L)_1$ is in bijection with $\bk^n\setminus\{0\}$.
Fix $d\geq 2$. 
We will use well-established result from geometric invariant theory to describe $\Irr(A)_d$ \cite{Mumford-Fogarty-Kirwan94} and follow King's approach \cite{King94}, see also \cite{Reineke08}.
Take $R_d$ the vector space of all representations $\pi:A\to M_d(\bk)$ which is isomorphic to $M_d(\bk)^n$. 
The formula $g\cdot (b_1,\dots,b_n):=(gb_1g^{-1},\dots,gb_ng^{-1})$ defines an action $PG_d\act R_d$ of the projective linear group $PG_d:=GL_d(\bk)/\bk^\times$.
The \emph{stable} representations of $R_d$ are the $\pi$ with (Zariski) closed orbit $PG_d\cdot \pi$ and finite stabiliser subgroup.
They form an open subset $R_d^{st}.$
One can prove that they are exactly the irreducible ones (closed orbit is equivalent to semisimplicity and finite stabiliser gives simplicity).
Better, the restricted action $PG_d\act R_d^{st}$ is \emph{free}.
Indeed, if $g\in GL_d(\bk)$ then it admits an eigenvector (since $\bk$ is algebraically closed) with eigenspace $E$. If $g\cdot b=b$, then each $b_j$ stabilises $E$.
Irreducibility implies $E=\bk^d$ and thus $g$ is trivial inside the quotient $PG_d$ proving freeness.
The orbit space $M^{st}_d:=R_d^{st}/PG_d$ of the free and algebraic action $PG_d\act R_d^{st}$ of the reductive group $PG_d$ inherits a structure of smooth variety of dimension:
$\dim(M^{st})=\dim(R_d^{st})-\dim(PG_d) = nd^2-(d^2-1)=(n-1)d^2+1.$

\begin{theorem}\label{theo:moduli-space}
The space of irreducible classes of representations of $L$ of fixed $A$-dimension $d\in\N^*$ is in bijection with a smooth variety of dimension $(n-1)d^2+1$.
\end{theorem}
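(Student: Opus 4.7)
The plan is to assemble the ingredients already developed in the preceding discussion: Theorem \ref{theo:main} combined with Theorem \ref{theo:irred} identifies $\Irr(L)_d$ with $\Irr(A)_d$ (up to a single excluded point when $d=1$), while the paragraph preceding the theorem identifies $\Irr(A)_d$ for $d\geq 2$ with the smooth orbit space $M^{st}_d=R_d^{st}/PG_d$. The proof splits into the cases $d=1$ and $d\geq 2$ and verifies the dimension count in each.

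First I would dispatch the case $d=1$. The map $\pi\mapsto(\pi(e_1),\dots,\pi(e_n))$ identifies $\Irr(A)_1$ with $\bk^n$, and by Proposition \ref{prop:degenerate-rep} the unique irreducible class in $\Irr(A)_1$ that fails to be full and nondegenerate corresponds to the origin; applying $\Pi$ sends this class to the zero module. Using Theorems \ref{theo:main} and \ref{theo:irred} together with Theorem \ref{thm:l-dim} to match $A$-dimensions, one concludes that $\Irr(L)_1$ is in bijection with the smooth affine variety $\bk^n\setminus\{0\}$, of dimension $n=(n-1)\cdot 1^2+1$.

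Next, for $d\geq 2$, every irreducible representation of $A$ of dimension $d$ is automatically full and nondegenerate by Proposition \ref{prop:degenerate-rep}, so Theorems \ref{theo:main} and \ref{theo:irred} yield a bijection $\Irr(A)_d\simeq\Irr(L)_d$ respecting the $A$-dimension by Theorem \ref{thm:l-dim} (since $\dim_A(\Pi(V))=\dim V$ whenever $V$ is full and nondegenerate). The GIT description recalled above identifies $\Irr(A)_d$ with the orbit space $M^{st}_d=R_d^{st}/PG_d$ of the free algebraic action of the reductive group $PG_d$ on the open subvariety $R_d^{st}\subset R_d\cong M_d(\bk)^n$ of stable, equivalently irreducible, representations. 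Invoking King's construction \cite{King94} (itself an instance of \cite{Mumford-Fogarty-Kirwan94}), the quotient $M^{st}_d$ inherits the structure of a smooth quasi-projective variety of dimension $\dim R_d-\dim PG_d = nd^2-(d^2-1) = (n-1)d^2+1$.

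There is no substantive obstacle remaining: the conceptual work has been done in Theorems \ref{theo:main}, \ref{theo:irred} and \ref{thm:l-dim}, and in the observation recorded before the theorem that GIT-stability coincides with irreducibility and that the $PG_d$-action is free on $R_d^{st}$, so that $M^{st}_d$ is a genuine geometric quotient. The only point requiring care is the bookkeeping at $d=1$, where the single irreducible class $[\pi]$ with $\pi(e_i)=0$ must be removed because it fails to be full and nondegenerate and is sent by $\Pi$ to the zero module, which is not counted among the irreducible $L$-modules.
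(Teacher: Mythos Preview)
Your proposal is correct and follows essentially the same approach as the paper: the argument in the paper is entirely contained in the discussion preceding the theorem, which likewise separates the case $d=1$ (where $\Irr(L)_1\simeq\bk^n\setminus\{0\}$) from $d\geq 2$ (where $\Irr(L)_d\simeq\Irr(A)_d\simeq R_d^{st}/PG_d$ via King's GIT quotient), with the same dimension count $nd^2-(d^2-1)$. Your invocation of Theorems \ref{theo:main}, \ref{theo:irred}, \ref{thm:l-dim} and Proposition \ref{prop:degenerate-rep} to justify the bijection $\Irr(L)_d\simeq\Irr(A)_d$ (minus a point when $d=1$) matches the paper's reasoning exactly.
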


When $\bk = \C$ we may define finer geometric structure, see \cite{Nakajima94} and \cite[Section 6]{King94}.
In this specific case of the free algebra there are other ways to define moduli spaces for $\Irr(A)_d$; for instance the interested reader may consult Artin's lecture notes \cite{Artin99}.

\section{Extending to Quiver and Leavitt path algebras} \label{sec:path-algebra}

This section explains how to extend the previous results to Leavitt \emph{path} algebras associated to \emph{arbitrary row-finite} directed graphs.
We will fully describe the main constructions and point out the main differences.
For details on Leavitt path algebras we recommend the book \cite{Abrams-Ara-Molina17} and the recent survey \cite{Abrams-Hazrat24}.

\subsection{Graphs and Leavitt path algebras.}
A directed graph $\Gamma$ (also called a quiver) consists of a quadruple $(E^0, E^1, r, s)$ where $E^0$ is the set of vertices, $E^1$ is the set of (directed) edges (or arrows), and $r,s$ are maps $E^1 \rightarrow E^0$. For each edge $e \in E^1$, $r(e)=re$ is the \textit{range} of $e$ and $s(e)=se$ is the \textit{source} of $e$. 
A vertex $\nu$ is a \emph{sink} when $E^1_\nu:=\{e\in E^1:\ se=\nu\}$ is finite, an \emph{infinite emitter} when $|E^1_\nu|=\infty$, and otherwise is \emph{regular}.
The graph is \emph{row-finite} when there are no infinite emitters.

\textbf{Paths in a graph.}
A \textit{path} of length $n > 1$ is a sequence of edges $p := e_1e_2\dots e_n$ such that $re_i = se_{i+1}$ for $i = 1, \dots, n-1$. 
We extend $s,r$ to paths so that $sp=se_1$ and $rp=re_n$ and say that $p$ has length $n$. 
Lengths $0$ and $1$ paths are vertices and edges, respectively.
A \emph{cycle} is a path with same range and source, and a \emph{loop} is a cycle of length $1$.
Write $E^n$ for the set of paths of length $n$ or end in a sink having length $k\leq n$. 
Let $E^*$ be the set of all paths which forms a category in the obvious way. 
Formally, a path $p$ is a morphism of this category with domain $rp$ and codomain $sp.$

\textbf{The Leavitt path algebra.}
As in the previous section we fix a field $\bk$ and all vector spaces and algebras are over it.
For a directed graph $\Gamma$, its \textit{extended graph} $\ti \Gamma$ is obtained from $\Gamma$ by adding inverses $e^*$ of all edges $e$ of $\Gamma$.
We extend $^*$ to an involution on all paths of $\ti \Gamma$ by setting $\nu^* := \nu$ for vertices and $(e_1\dots e_n)^* := e_n^*\dots e_1^*$.

\begin{definition} \label{def:LPA}
The quiver algebra $A$ of a directed graph $\Ga$ is generated by $E^0\sqcup E^1$ satisfying the relations:
\begin{align*}
&\nu\omega = \delta_{\nu,\omega} \textrm{ for all } \nu,\omega \in E^0, \tag{$E0$}\\
&s(e)e = e = er(e) \textrm{ for all } e \in E^1, \tag{$E1$}. 
\end{align*}

The \emph{Leavitt path algebra} $L$ of $\Gamma$ is the quotient of the quiver algebra of the \emph{extended} graph $\tilde \Ga$ (hence adding the reverse edges $e^*$ as generators) by the relations:
	\begin{align}
		&e^*f = \delta_{e,f}r(e) \textrm{ for all } e,f \in E^1, \tag{CK1} \\
		&\nu = \sum_{e \in E_\nu^1} ee^* \textrm{ for all regular vertices } \nu. \tag{CK2}
	\end{align}
\end{definition}

\begin{example}
	\begin{itemize}
		\item The Leavitt path algebra of a bouquet of $n$ loops is the usual $n$th Leavitt algebra studied in the previous sections. 
		\item The $n$-line graph consists of $n$ vertices $\{\nu_i\}_{i=1}^n$ and $n-1$ edges $\{e_j\}_{j=1}^{n-1}$ such that $se_i = \nu_i$ and $re_i = \nu_{i+1}$. The Leavitt path algebra is the matrix algebra $M_n(\bk)$.
		\item The $n$-circle graph is formed by adding an edge from $\nu_n$ to $\nu_1$ in the $n$-line graph. The Leavitt path algebra is the matrix algebra $M_n(\bk[x,x^{-1}])$ where $\bk[x,x^{-1}]$ is the ring of Laurent polynomials.
	\end{itemize}
\end{example}

\subsection{Representations of path algebras and Leavitt path algebras}

\begin{center}
From now on $\Ga$ is a fixed {\bf row-finite} directed graph and $A,L$ are its associated quiver algebra and Leavitt path algebra.
\end{center}

\subsubsection{Representations of path algebras}

A \emph{quiver representation} $(\pi,V)$ is a contravariant functor from the path category of $\Ga$ to $\Vect$. Hence, $(\pi,V)$ defines a family of vector space $(V_\nu:\ \nu\in E^0)$, and each edge $e\in E^1$ defines a linear map $\pi(e):V_{se}\to V_{re}.$
We identify quiver representations with right-modules of $A$ that we also call representations via $(V_\nu)_\nu\mapsto \oplus_\nu V_\nu$ and $\pi(e)\restriction_{V_\omega}=0$ if $\omega\neq se.$ 
These representations are exactly the \emph{unital} one (in the weak sense: $\textrm{span}\{v\cdot a:\ v\in V, a\in A\}=V$). 
They form a category $\Rep(A)$. 

The \emph{dimension vector} $\dim_\Gamma(V)$ is the tuple $(\dim(V_\nu):\ \nu \in E^0)$, i.e.~$\dim_\Ga:E^0\to \N\cup\{\infty\}$ is a function with domain $E^0$.

We define $\Rep(L)$ to be the category of unital representations of $L$.
There is an obvious algebra morphism $\iota:A\to L, e_i\mapsto e_i$ given by universality of $A$. This defines a forgetful functor $\Lambda:\Rep(L)\to\Rep(A)$.

\subsubsection{Trees and forests}
As in the base case we can define a certain category of forests $\cF$ and a certain isomorphism with $\Func_\ot(\cF,\Vect)$. 
This is not essential to our discussion but permits to interpret our construction via Jones' technology and to use convenient notations. We shall be rather brief.

For each vertex $\nu$ we consider its associated $\nu$-caret $\Y_\nu$. This is a rooted full tree with all leaves at distance one so that the root is labelled by $\nu$, and any edge $e\in E^1$ with $se=\nu$ defines a leaf labelled by $re.$ 
Trees are obtained by gluing together finitely many carets lining up roots and leaves with same labelling. Forests are finite lists of trees.
Now, for each $\nu\in E^0$ we fix an arbitrary order on 
$$E^1_\nu:=\{e\in E^1:\ se=\nu\}.$$
This permits to order leaves and roots of forests. 
We have a structure of a monoidal category $(\cF,\circ,\otimes)$ obtained by stacking and by concatenating.
Objects of $\cF$ are finite lists $\nu_1\dots\nu_n$ of vertices of $\Ga$ and $\Hom(\nu_1\dots\nu_n,\mu_1\dots\mu_m)$ is the set of forests with $m$ roots, $n$ leaves, and matching labels.
We previously wrote $I$ for the trivial tree. 
Now, each $\nu\in E^0$ produces one trivial tree, still denoted $\nu$, that has no edges and one vertex labelled by $\nu$.
A presentation of this category is obtained by considering elementary forests (forests with one caret) and writing the two possible decompositions of a forest with two carets rooted at different roots. 
We then deduce an isomorphism of categories,
$$\Caret:\Rep(A)\to \Func_{\otimes}(\cF,\Vect)$$
where given $(\pi,V)\in \Rep(A)$ we define for each $\nu\in E^0$ the caret map 
$$V_\nu\to \bigoplus_{e\in E^1_\nu} V_{re} \ , \ v\mapsto \bigoplus_{e\in E^1_\nu}v\cdot e.$$ 

\begin{remark}
If $\nu$ would be an infinite emitter, then its caret map would involve infinitely many factors. This does not work well in this algebraic context and we did not find any satisfactory way to resolve this problem.
\end{remark}

\subsubsection{Functors between $\Rep(A)$ and $\Rep(L)$.}
Fix $(\pi,V)\in \Rep(A)$ and define
\[\Pi(V) := \textrm{span}\{(p, x) : p \in E^*,\ x \in V_{rp}\}/\sim\]
where the equivalence relation $\sim$ is generated by
\begin{equation}\label{eq:sim}(p, x) \sim \sum_{e\in E^1_{rp}} (pe, x\cdot e).\end{equation}
Recall that $E^1_{rp}$ is the set of edge with source $rp$. 
By assumption $\Ga$ is row-finite implying $|E^1_{rp}|<\infty$ and the formula of above is well-defined.
Write $[p,x]$ for the equivalence class of $(p,x)$. 
Hence, elements of $\Pi(V)$ are linear combinations of classes of paths $p$ decorated by vectors of $V_{rp}$.
The space $\Pi(V)$ decomposes into a direct sum $\oplus_{\nu\in E^0}\Pi(V)_\nu$ where $\Pi(V)_\nu$ is the subspace of $[p,x]$ with $sp=\nu$.

We define an $L$-module structure on $\Pi(V).$
Consider any pair of paths $p,q\in E^*$ (possibly equal to vertices) and $x\in V_{rp}.$
We set 
$$[p,x]\cdot q^*:=
\begin{cases}
    [qp,x] &\text{ if } rq=sp \\
    0 &\text{ otherwise.}
\end{cases}.$$
Thus, if $q=\nu\in E^0$ then $[p,x]\cdot \nu=\delta_{sp,\nu}[p,x]$: the coordinate projection onto $\Pi(V)_\nu$. 

Consider $e,f\in E^1, u\in E^*, x\in V_{ru}$ and set:
$$[eu,x]\cdot f := \delta_{f,e} [u,x].$$
Replace now the path $eu$ by a vertex $\nu$.
If $\nu$ is not a sink, then 
$$[\nu,x] \cdot f := \sum_{e\in E_\nu^1} [e,x\cdot e] \cdot f =\begin{cases}
[rf,x\cdot f] & \text{ if } sf=\nu\\
0 & \text{ otherwise.}
\end{cases}.$$
Assume now that $\nu$ is a sink. We set $[\nu,x]\cdot f=0$.
This is coherent with the previous formula since $f=sf\cdot f$ and that $[\nu,x]\cdot sf=0$ since $\nu\neq sf.$

The above defines an $L$-module structure. 
Indeed, $\nu\in E^0$ defines the coordinate projection $\Pi(V)\onto\Pi(V)_\nu$ and $e\in E^1$ (resp.~$e^*$) a map from $\Pi(V)_{se}\to\Pi(V)_{re}$ (resp.~from $\Pi(V)_{re}\to\Pi(V)_{se}$) giving (E0) and (E1).
Formula (CK1) and (CK2) are easily verified noting that (CK2) only hold for \emph{regular} vertices (hence excluding sinks).

Consider an intertwinner $\theta:V\to V'$ between representations of $A$. 
This naturally defines an intertwinner of $L$-modules $\Pi(\theta):\Pi(V)\to\Pi(V')$ via the formula
\[\Pi(\theta)([p, x]) := [p, \theta_{rp}(x)].\]
Hence, we have constructed a functor $\Pi : \Rep(A) \rightarrow \Rep(L)$.

\begin{remark}In the base case (when $|E^0|=1$) we described vectors in $\Pi(V)$ using (classes of) trees $[t,v]$ with leaves decorated by vectors of $V$. 
We may proceed similarly but we will still need to do linear combinations (when $|E^0|\geq 2$) to sum decorated trees having roots of different labels. This is because the tree set $\cT$ is no longer directed but decomposed into $\sqcup_{\nu\in E^0}\cT_\nu$ where each $\cT_\nu$ (trees with roots $\nu$) is now directed.
Elements of $\Pi(V)$ are sums of $[t_\nu,v_\nu]$ with $t_\nu\in\cT_\nu$.
For a fixed $\nu\in E^0$, the set of $[t_\nu,v_\nu]$ is the $\nu$-component $\Pi(V)_\nu$.
\end{remark}

We have a \emph{canonical map} $j := j_V: V \rightarrow \Pi(V)$ which is a morphism of $A$-modules that now decomposes over $E^0$ into $j=(j_\nu)_{\nu\in E^0}$ with 
$$j_\nu: V_\nu\to \Pi(V)_\nu,\ x\mapsto [\nu,x].$$
Moreover, $(j_V:\ V\in \Rep(A))$ defines a natural transformation $\id\to \Lambda\circ\Pi.$
Note that when $\nu$ is a sink, then the $\nu$-component of $j$ is always injective.
Proposition \ref{prop:jones-functor-surj} extends in the obvious manner so that $(j_W)_W$ with $W$ in the range of $\Lambda$ defines a natural isomorphism $\id\to\Pi\circ \Lambda$ of endofunctors of $\Rep(L)$.
The key point here consists in showing that if $W\in \Rep(L)$, then $j_W:W\to \Pi(W), x\mapsto \sum_{\nu\in E^0} [\nu, x\cdot \nu]$ is surjective.
Indeed, take $[p,w]\in \Pi(W)$ for some path $p$ of length $n$ and $w\in W_{rp}.$
Then observe
$$j_W(w\cdot p^*) = [sp, w\cdot p^*] = \sum_{q \in E^n_{sp}} [q, w\cdot p^*q] = [p,w]$$
where $E^n_{sp}$ is the subset of $E^n$ containing paths with source $sp$.

\subsection{Complete, full and degenerate representations}

Definitions and results extend in the obvious manner.

\begin{definition}
Let $V$ be a representation of $A$.
\begin{enumerate}
\item A subrepresentation $V_c \subset V$ is \emph{complete} if for all $v \in V$ there exists $k \geq 1$ satisfying $v\cdot p\in V_c$ for all $p\in E^k.$       
\item $V$ is \emph{full} if it does not contain any proper complete subrepresentations.
\item If $j_V:V\to\Pi(V)$ is injective, then $V$ is \emph{nondegenerate}, otherwise \emph{degenerate}.
\item Write $\Comp(V)$, $\cS$, $\Rep_S(A)$, $\Rep_{\full}(A)$, $\Rep^{\nd}(A)$ as defined in Section \ref{sec:base-case}.
\end{enumerate} 
\end{definition}

As before, $V_c \subset V$ is complete exactly when the inclusion map $\epsilon: V_c \hookrightarrow V$ defines an isomorphism $\Pi(\epsilon) : \Pi(V_c) \rightarrow \Pi(V)$ of $L$-modules which is equivalent to ask that $j(V_c)$ generates the $L$-module $\Pi(V).$
Recall that here $E^k$ include all paths of length smaller than $k$ ending in a sink and observe that if $V_c\subset V$ is complete, then $V_c$ contains $V_\nu$ when $\nu$ is a sink.
Proofs of Propositions \ref{prop:complete-rep} generalise by using paths in $E^k$ rather than product of $k$ generators.
To generalise Proposition \ref{prop:degenerate} we replace the caret map $\Phi(\Y)$ by the direct sum of all labelled caret maps $C:=\oplus_{\nu\in E^0} \Phi(\Y_\nu)$. The generalised key statement is now: $j_V$ is injective if and only if $C$ is.
Proposition \ref{prop:class-S} on class $\cS$ continues to hold with quasi-identical proofs. 
For precision, below is the analog of item (5) of this proposition.
\begin{proposition}
If $V\in \Rep(A)$ and $\dim(V)=\sum_{\nu\in E^0}\dim(V_\nu)$ is finite, then $V\in \cS$.
\end{proposition}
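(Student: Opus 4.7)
The plan is to mimic the proof of item (5) of Proposition \ref{prop:class-S} from the base case, namely the general lemma: if $V$ is a finite-dimensional $\bk$-vector space and $\cV$ is a nonempty family of vector subspaces of $V$ closed under finite intersections, then $\cV$ admits a smallest element under inclusion. Indeed, once this lemma is at hand the statement reduces to two observations: (i) the poset $\Comp(V)$ is nonempty since $V\in\Comp(V)$, and (ii) $\Comp(V)$ is stable under finite intersections, which is the row-finite generalisation of item (5) of Proposition \ref{prop:complete-rep} whose proof (take the maximum of the integers $k_i$ witnessing completeness of finitely many $V_c^i\subset V$) carries over verbatim, now with $k_i$ bounding the length of paths in $E^{k_i}$ rather than the length of words.

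First I would note that every complete subrepresentation of $V$ is in particular a vector subspace of the finite-dimensional vector space $\bigoplus_{\nu\in E^0}V_\nu$, so the function $\dim:\Comp(V)\to\N$ attains its infimum. Pick $V_s\in\Comp(V)$ of minimal dimension. Given any other $V_c\in\Comp(V)$, the intersection $V_s\cap V_c$ is again complete by the stability under finite intersections, and its dimension is at most $\dim(V_s)$; minimality forces $V_s\cap V_c=V_s$, i.e.\ $V_s\subset V_c$. Thus $V_s$ is the smallest element of $\Comp(V)$, proving $V\in\cS$.

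There is no serious obstacle: the only place where the row-finite hypothesis enters is in ensuring that the notion of completeness still makes sense (so that finitely many edges out of each vertex are involved in the caret maps and in the definition of $E^k$), and in ensuring that the intersection argument provides a common bound $k:=\max_i k_i$. The finite-dimensionality of $\bigoplus_\nu V_\nu$ is precisely the hypothesis $\sum_\nu\dim(V_\nu)<\infty$, so the dimension-minimisation step works exactly as in the base case.
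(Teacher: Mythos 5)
Your proof is correct and follows essentially the same route as the paper: reduce to the elementary fact that a nonempty, intersection-stable family of subspaces of a finite-dimensional space has a smallest element, combined with the row-finite version of the closure of $\Comp(V)$ under finite intersections. The only (cosmetic) difference is that you establish the elementary fact by choosing a subspace of minimal dimension rather than by induction on $\dim(V)$, but these are interchangeable.
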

We cannot hope that $\dim(V_\nu)<\infty$ for all $\nu\in E^0$ gives $V\in \cS$ as demonstrated below.

\begin{remark}
Consider $\Ga$ with $E^0=\N$ and one edge $e_n:n\to n+1$ per $n\in \N.$
Define the $A$-module $V$ having basis $(b_n)_n$ with $\bk b_n=V\cdot n$ so that $b_n\cdot m=\delta_{n,m} b_{n+1}.$
For each $k\in\N$ the $b_j$ with $j\geq k$ span a complete submodule $Z_k$. Since $\cap_k Z_k=\{0\}$ we have $V\notin\cS$.
\end{remark}

As for the Leavitt algebra, we are able to functorially both extract the smallest complete submodule and remove degenerescence. 
This gives two functors 
$$\Sigma:\Rep_\cS(A)\to \Rep_{\full}(A) \text{ and }\nabla:\Rep(A)\to \Rep^{\nd}(A)$$ 
with again $\nabla(V):=V/\ker(j_V)$.
There are natural isomorphisms between the three endofunctors of $\Rep_{\cS}(A)$:
		\[\nabla \circ \Sigma, \Sigma \circ \nabla \textrm{ and } \Sigma \circ \Lambda \circ \Pi.\]
We generalise Proposition \ref{prop:degenerate-rep} noting that each regular vertex of $\Ga$ gives one degenerate irreducible representation of $A$.

\begin{proposition} \label{prop:irred-deg-path}
An irreducible representation $(\pi, V)$ of $A$ is degenerate when $V=V_\nu$ is one-dimensional for a regular vertex $\nu$ and $\pi(e)=0$ for all $e\in E^1.$
\end{proposition}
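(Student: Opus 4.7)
The plan is to adapt the proof of Proposition \ref{prop:degenerate-rep}, carefully distinguishing sinks from regular vertices. The key object is the subspace
\[K := \{v \in V : v\cdot e = 0 \text{ for all } e \in E^1\},\]
which I would first verify is an $A$-submodule: it is preserved by all vertex projections because it decomposes as $K = \bigoplus_{\nu\in E^0}(K \cap V_\nu)$, and every edge action sends $K$ into $\{0\} \subset K$. Moreover, $K \cap V_\nu = V_\nu$ whenever $\nu$ is a sink (vacuously, since $E^1_\nu = \emptyset$), while $K \cap V_\nu = \ker \Phi(\Y_\nu)$ whenever $\nu$ is regular.

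Assume now that $V$ is irreducible and degenerate. Irreducibility forces $K \in \{0,V\}$. If $K = 0$, then $\Phi(\Y_\nu)$ is injective for every regular $\nu$, and the generalisation of Proposition \ref{prop:degenerate}(3) would produce nondegeneracy, contradicting our assumption. Hence $K = V$, i.e.\ every edge acts as zero. With all edge actions trivial, any subspace that splits as a direct sum over $E^0$ is automatically an $A$-submodule, so irreducibility forces $V = V_\nu$ with $\dim V_\nu = 1$ for a single $\nu \in E^0$.

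It remains to argue that $\nu$ is regular. If $\nu$ were a sink, then the excerpt's observation that the $\nu$-component of $j$ is always injective at every sink would give $j_V = j_\nu$ injective, contradicting degeneracy. So $\nu$ must be regular. For the converse, if $V = V_\nu$ is one-dimensional with $\nu$ regular and $\pi(e) = 0$ for all $e \in E^1$, then the generating relation \eqref{eq:sim} collapses to $(\nu, x) \sim \sum_{e\in E^1_\nu}(e, x\cdot e) = 0$, whence $j_\nu \equiv 0$ and $V$ is degenerate; irreducibility is immediate since every subspace of the one-dimensional $V_\nu$ is a submodule.

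The only delicate point\,---\,and the reason this is a genuine generalisation rather than a routine transcription\,---\,is the asymmetry between sinks and regular vertices: at a sink the caret map has zero codomain so $\ker \Phi(\Y_\nu)$ is the full $V_\nu$, yet $j_\nu$ remains injective because the defining relation of $\Pi(V)$ is only applied at regular vertices (mirroring the restriction of (CK2) to regular vertices in the definition of $L$). This asymmetry is precisely what forces the exceptional irreducible degenerate representations to be parameterised by the regular vertices of $\Ga$, and not by all of $E^0$.
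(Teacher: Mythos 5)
Your argument is correct and lands in the same place as the paper's, but you have effectively transcribed the \emph{base-case} proof of Proposition~\ref{prop:degenerate-rep} (work with the submodule $K=\ker\bigl(\oplus_\nu\Phi(\Y_\nu)\bigr)$, invoke irreducibility to get $K\in\{0,V\}$, then rule out $K=0$ via the generalisation of Proposition~\ref{prop:degenerate}(3)), whereas the paper's own proof of Proposition~\ref{prop:irred-deg-path} takes a shortcut: it observes directly that $\ker(j_V)$ is a submodule, hence equal to $V$ by irreducibility and degeneracy, and from $\ker(j_V)=V$ reads off $\pi(e)=0$. That last implication is compressed in the paper and, strictly speaking, needs a further word of irreducibility (e.g.\ by considering the chain $V_k=\{v: v\cdot p=0$ for all $|p|\ge k\}$, or the submodule spanned by the $v\cdot e$); your route through $K$ and the kernel of the caret maps avoids this compression entirely and is arguably the cleaner adaptation. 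You also add two things the paper leaves implicit: a check of the converse implication, and the explicit bookkeeping of the sink/regular dichotomy (that $K\cap V_\nu=V_\nu$ at sinks while $j_\nu$ stays injective there), which is exactly the asymmetry that makes the exceptional degenerate irreducibles be indexed by regular vertices only. So: correct, same spirit, slightly different intermediate submodule, more complete in its coverage.
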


\begin{proof}
Consider an irreducible degenerate representation $(\pi, V)$ of $A$. 
We necessarily have $\ker(j_V)=V$ implying $\pi(e)=0$ for all edge $e$ which then implies that any vector subspace is a submodule.
By irreducibility $V=V_\nu\simeq\bk$ for a certain $\nu$ and this $\nu$ is not a sink since otherwise $j_V$ would be injective.
\end{proof}

Proofs of Theorems \ref{theo:main}, \ref{theo:irred} now adapt in the obvious way giving items (1) and (2) of Theorem \ref{letterthm:main-graph}.

\subsection{Dimension}
As for the Leavitt algebra, we can define two dimension functions on $A$-modules and on $L$-modules both denoted $\dim_A$. They are now valued in $(\N \cup \{\infty\})^{E^0} \cup \{\varepsilon\}$ giving $A$-dimension \emph{vectors}. Recall, for a representation $V$ of $A$ we write $\dim_\Gamma(V)$ for the dimension vector $(\dim(V_\nu))_{\nu\in E^0}$.
We order dimension vectors as follows: $d\leq \widehat d$ when for all $\nu\in E^0$ we have $d_\nu\leq \widehat d_\nu.$

\begin{definition}
Consider $V\in \Rep(A)$ and $W\in \Rep(L)$.
We set
\begin{itemize}
\item $\dim_A(V)=\varepsilon$ when $V\notin\cS$ and $\dim(\nabla\circ\Sigma(V))$ otherwise;
\item $\dim_A(W)=\varepsilon$ if $W\not\simeq\Pi(V')$ for all $V'\in\cS$ and is otherwise equal to the infimum of $\dim_\Ga(V')$ over all $V'\in\Rep(A)$ satisfying $\Pi(V')\simeq W$.
\end{itemize}
\end{definition}
Note, the above infimum exists since if $V_1,V_2 \in \Rep(A)$ satisfy $\Pi(V_1) \simeq \Pi(V_2) \simeq W$, then after identifying $\Pi(V_1)$ with $\Pi(V_2)$ we note that $V := j_{V_1}(V_1) \cap j_{V_2}(V_2)$ satisfies $\Pi(V) \simeq W$ and has smaller dimension vector.
Theorem \ref{thm:l-dim} extends in the obvious manner; in particular $\dim_A(\Pi(V))=\dim_A(V)$ and $\dim_A(\Lambda(W))=\dim_A(W).$

\subsection{Moduli spaces}\label{sec:moduli-space-path}
For this section we assume that $\bk$ is algebraically closed  of characteristic $0$, and continue to use similar notation for $\Irr(L), \Irr(L)_d, \Irr(A), \Irr(A)_d$, where now $d$ refers to a dimension vector.
Recall that the $d$ of $\Irr(L)_d$ refers to the \emph{$A$-dimension} while the $d$ of $\Irr(A)_d$ refers to the \emph{usual} dimension.

Consider $(\pi,V)\in \Rep(A)$ irreducible with $\dim(V)=\sum_{\nu\in E^0} \dim(V_\nu)<\infty$ .
We have that $V\in \cS$.
Moreover, $V$ is full, nondegenerate, and $\dim_\Ga(V)=\dim_A(V)$ unless $V=V_\mu\simeq\bk$ for a certain regular vertex $\mu$ and $\pi(e)=0$ for all generators.
In this latter case $\dim_A(V)$ is the zero vector.
Hence, the irreducible classes of representations of $A$ of $A$-dimension $d$ and $\Irr(A)_d$ differs at most by one point.
Our main theorems prove that the functor $\Pi$ preserves the $A$-dimension, the irreducibility, and moreover the classes when the $V$'s are nondegenerate and full.
Hence, up to a point, $\Irr(A)_d$ and $\Irr(L)_d$ are in bijection.
It is then sufficient to find moduli spaces for the former.
We briefly outline this last step which follows from  \cite{King94}, see also \cite{Reineke08}.
Finer geometric structures are available when $\bk=\C$ (see \cite{Nakajima94, King94}) and interesting results remain when $\bk=\R$ \cite{Bertozzi-thesis}.

Fix a summable $d$ and take $R_d$ to be the vector space of all $V\in \Rep(A)$ with $\dim_\Ga(V)=d.$
This is parameterised by the product of matrix algebras of size $d_{re}\times d_{se}$ indexed by $e\in E^1$.
Consider the linear group
$$G_d = \prod_{\nu \in E^0} GL(d_\nu,\bk)$$
and the natural base change action $PG_d:=G_d/\bk^\times\act R_d$. 
The \textit{stable} representations are precisely the irreducible ones which  form an open subset $R_d^{st} \subset R_d$, and $PG_d\act R_d^{st}$ is an algebraic and free action of a reductive group.
Note that $R_d^{st}$ could be empty (e.g.~take the graph $\nu\to\mu$ and $d=(d_\nu,d_\mu)=(0,2)$), see also Section 5.3 of \cite{Reineke08}. 
The quotient $M_d^{st} := R_d^{st}/PG_d$, when nonempty, has a smooth variety structure of dimension $\dim(R_d)-\dim(PG_d)$ and is in bijection with $\Irr(A)_d$.
This yields the third item of Theorem \ref{letterthm:main-graph}.

\section{Simple modules in the literature}

We interpret in our framework previously known simple modules and provide generalisations to obtain new simple modules.

\subsection{Chen modules}\label{sec:Chen}
\subsubsection{Construction of Chen modules.}
Chen modules form the first family of simple $L$-modules defined for an arbitrary graph $\Gamma$, and most other constructions in the literature are generalisations of them \cite{Chen15}.
They are constructed using paths and rays in $\Gamma$. A ray $p$ is \textit{cyclic} if there exists a prime cycle $c$ (there does not exist a cycle $d$ and $k > 1$ such that $c = d^k$) such that $p = c\cdot c\dots=:c^\infty$, and is \textit{irrational} if it is not tail-equivalent to a cyclic ray (two rays are tail-equivalent if they are equal up to finite prefix-removal). 
Chen constructed three families of modules denoted
$$F_q^\lambda, F_p \text{ and } N_\omega$$
where $q$ is a cyclic ray, $\lambda\in \bk^\times$, $p$ is an irrational ray, and $\omega$ is a sink.
If $q = c^\infty$ for some prime cycle $c$, then $F_q^\lambda$ is the free vector space with basis $[q]$ - the tail-equivalence class of $q$. 
Note that $\dim(F_q^\lambda)=\infty$
if and only if there exists a vertex of $c$ contained in another prime cycle. 
Define scalars $(\lambda_r)_{r \in [q]}$ by $\lambda_q := \lambda$ and $\lambda_r := 1$ if $r \neq q$.
The $L$-module structure is:
\[q \cdot f := \lambda_q f^*q \textrm{ and } q \cdot f^* := fq\]
for all $q \in [p], f \in E^1$ taking the convention that $eq=0$ if $re\neq sq$ and $e^*e = r(e)$.
Our description of $F_q^\lambda$ slightly differs from the original one; however, they contain all the original classes of $L$-modules while also permitting to define $F_q^\lambda$ even if $\bk$ does not contain all $n$th roots of $\lambda.$
If $p$ is irrational, then $F_p$ is defined similarly as $F_q^\lambda$ except now $\lambda_r = 1$ for \textit{all} $r \in [p]$ (changing the scalars $\lambda_r$ does not change the isomorphism class of the $L$-module). 
Finally, $N_\omega$ is the free vector space with basis paths ending in $\omega$. The $L$-module structure is similarly defined.

Chen proved that all of the above $L$-modules are simple with the $F_q^\lambda,F_p,N_\omega$ classified by $([q],\lambda), [p],\omega$, respectively.

\subsubsection{Chen modules from $A$-modules.}
It is easy to recover Chen modules using the functor $\Pi$ we have constructed. From our previous results, it is sufficient to find complete $A$-submodules of the Chen modules.

First consider a ray $p$ that is either cyclic or irrational. Let ${}_np$ denote the tail of $p$ obtained by removing the first $n$ edges of $p$ for $n \in \N$. 
Define $T_p=\{p,{}_1p, {}_2p,\dots\}$ with $|c|$ elements when $p=c^\infty$ with $c$ prime,
otherwise being infinite, and write $T_p^\nu\subset T_p$ for those with source $\nu$.
The $A$-submodule $C_p := \bk[T_p]$ (of $\Lambda(F_p^\lambda)$ or $\Lambda(F_p)$) is complete (in both cyclic and irrational case) and satisfies $C_p \cdot \nu = \bk[T_p^\nu]$.
In the cyclic case, $\dim(C_p)=|c|$.

Consider a sink $\omega$. 
Define $C_\omega=\bk \omega$ to be the $A$-submodule generated by the vertex $\omega$ inside $\Lambda(N_\omega)$.
Hence, $C_\omega \cdot \nu = \{0\}$ if $\nu \neq \omega$ and $C_\omega \cdot e = \{0\}$ for all edges $e \in E^1$. Moreover, $C_\omega$ is a complete one-dimensional $A$-submodule.

\subsubsection{Classification via $A$-modules and $A$-dimension vector of Chen modules.}\label{sec:Chen-non-S}
When $p$ is cyclic it is clear the $A$-module $C_p$ is simple and is moreover nondegenerate since it lives inside $\Lambda(F_p^\lambda)$. This assures it is full.
Similarly, $C_\omega$ is simple, nondegenerate and full (compare this to Proposition \ref{prop:irred-deg-path}, the key point being that $\omega$ is a \emph{sink}).
Applying Theorem \ref{letterthm:main-graph} 
we recover simplicity of the Chen modules and classifications of $F_q^\lambda$, $N_\omega$.

If $p$ is irrational then the situation is drastically different. 
Indeed, for each $k\geq 0$ consider $C^k_p\subset C_p$: the span of the $_jp$ with $j\geq k$.
They form a decreasing sequence of complete submodules with trivial intersection. 
This prevents the existence of a smallest complete submodule. 
Hence, $C_p\notin\cS$ and is reducible.
This shows in particular that $C_p$ does not admit a smallest complete submodule, i.e.~$C_p\notin \cS$. 

However, consider an arbitrary vector $v = \sum_{i=1}^n a_iq_i \in C_p$ with $a_i \in \bk$ and $q_i = {}_{n_i}p$ with $n_i \neq n_j$ if $i\neq j$. Since $p$ is irrational, there exists a path $r$ such that it is a prefix of $q_1$ but not one for $q_j$, $j \neq 1$. Thus, $v \cdot r = a_1{}_n(q_1)$ where $n$ is the length of $r$. From this we can deduce that any nontrivial $A$-submodule of $C_p$ is complete. This reproves that $\Pi(C_p)\simeq F_p$ is simple.
By a similar argument to the cyclic case, it is easy to show that the equivalence classes of the $A$-module $C_p \subset \Lambda(F_p)$ is determined by $[p]$. 

Finally, observe that each of the three classes of Chen modules have different $A$-dimension vectors, and thus these classes are pairwise inequivalent to each other.

\subsubsection{Connection with representations of the Cuntz algebra.}
Chen's construction was inspired by a certain family of representations of the Cuntz algebra $\cO_n$ constructed in \cite{Lawrynowicz-Nono-Suzuki03} and equivalent to the \textit{permutation} representations of \cite{Bratteli-Jorgensen99}.
These representations of $\cO_n$ are analogous to the untwisted Chen modules $F_p^1$ and $F_p$.
The twisted cases $F_p^\lambda$ correspond to the larger class of \emph{atomic} representations $\cO_n$ first constructed in \cite{DHJ-atomic15}.
They were deeply studied from the viewpoint of Pythagorean representations in a previous paper of the present authors \cite{Brothier-Wijesena24a}.
We showed that their restriction to Thompson's group $V$ are monomial representations associated to parabolic subgroups.
Similar results can be adapted to Chen modules over the Leavitt algebra $L$ (when $\Ga$ is a bouquet of $n$ loops) and their restriction to the $n$-ary version of Thompson's group $V$.

\subsection{Other Chen modules.}

The construction of Chen was subsequently generalised by others to obtain new families of simple $L$-modules which are also termed as Chen modules. We shall continue to use the same notation from the preceding subsection.

\subsubsection{Chen modules from twisting by irreducible polynomial.}
These simple modules were constructed by Ara and Rangaswamy \cite{Ara-Rangaswamy14}. 
It is assumed $\bk$ is not algebraically closed and $\Gamma$ has an \emph{exclusive} cyclic $c$ - every vertex on $c$ does not belong to another cyclic.

Fix an irreducible monic polynomial $P\in\bk[x]$ giving the field extension $\bk[x]/P$ generated by $\bk$ and $x$. Let $q=c^\infty$ be a periodic ray with $c$ an exclusive cycle.
Construct the usual Chen module $F^{x}_q$ for the Leavitt path algebra $\ti L$ over $\bk[x]/P$ and make it an $L$-module, denoted $F_q^P$, using the obvious embedding $L\into \ti L$.
When $P(x) = x - \lambda, \lambda\in\bk^\times$, then $F^P_q$ is the Chen module $F^{\lambda}_q$ described in the previous subsection.
The authors showed that each of these modules are simple, are classified by $(P,[q])$, and they are not equivalent to any of the other Chen modules.

We now describe a simple $A$-module $C_q^P$ living inside $F^P_q$ so that $\Pi(C_q^P)\simeq F^P_q.$
Let $e$ be the first edge in the cycle $c$. Then define $C_q^P$ to be the $A$-module with space
\[(\bk[x]/P) \otimes \bk[T_q]\]
(the tensor product being over $\bk$) and $A$-module structure
\begin{equation*}
	(h \otimes r) \cdot f := 
	\begin{cases*}
		h \otimes f^*r \textrm{, }&$\quad f \neq e$\\
		xh \otimes f^*r \textrm{, }&$\quad f = e$.
	\end{cases*} \text{ for } h\in \bk[x]/P \ ,\ r\in T_q \ , \ f\in E^1.
\end{equation*}
It is not difficult to identify $C_q^P$ with an $A$-submodule of $\Lambda(F_q^P)$ that is complete.
Having that $\bk[x]/P$ is a field easily implies that all vectors are cyclic and thus $C_q^P$ is simple.

The construction of the modules $F_q^P$ were generalised for \emph{arbitrary} cycles by \'{A}nh and Nam in \cite{Anh-Nam21} where they proved similar classification results. 
Our construction of $C_q^P$ does not use the fact that $c$ is exclusive and hence permits to obtain these latter modules.

\subsubsection{Chen modules from infinite emitters.}
The Chen module $N_\omega$ from the previous subsection was generalised into $S_{\omega\infty}$ for $\omega$ an infinite emitter as being the free vector space over the set of paths ending in $\omega$ \cite{Ara-Rangaswamy14, Rangaswamy15}.
The authors showed that these modules are all simple and are not isomorphic to any of the above Chen modules. 
Note that these constructions cannot be obtained with our techniques.

\subsubsection{Further generalisations of Chen modules.}
We extend the $L$-modules $F_q^P$ using a non-algebraically closed field $\bk$. 
Take a vector space $K$, a linear map $X:K\to K$ with no nontrivial proper invariant subspaces, and a periodic ray $q=c^\infty$.
Define the $A$-module $C_q^X:=K\otimes \bk[T_q]$ as $C_q^P$ but where $\bk[x]/P$ is replaced by $K$ and where the multiplication by $x$ is replaced by the map $X$.
It is easy to see that $C_q^X$ is irreducible and are classified by $[q]$ and the pair $(K,X)$ up to equivariant isomorphisms.

Here is another generalisation.
Consider a cyclic ray $q = c^\infty$ and let $r$ be the ray such that ${}_1r = q$. 
Fix a nonzero vector $v \in \bk[T_q]$.
Then define an $A$-module $C_q^v := \bk[T_q]$ with action given by 
$$p \cdot e := e^*p \textrm{ for } p \neq r \textrm{ and } r \cdot e := v, \textrm{ for all } p \in T_q,\ e \in E^1.$$
Certain choices of $v$ yield simple $L$-modules $F_q^v$ that are inequivalent to any of the above Chen modules (though not all choices of $v$ will give simple modules).

\subsection{Graded-representations} \label{subsec:graded-irrep}
The Leavitt path algebra $L$ has a natural $\Z$-grading $\vert \cdot \vert$ given by 
$$\vert pq^*\vert := \vert p \vert - \vert q \vert$$
where $\vert p \vert$ denotes the length of the path $p$. Subsequently, it is natural to study \textit{graded} representations of $L$. A representation $W$ of $L$ is graded if $W$ is a graded vector space, so that $W = \oplus_{j \in \Z} W_j$, and $W_j \cdot l \subset W_{j + \vert l \vert}$ for all homogeneous elements $l$ in $L$. 
Graded-subrepresentations are defined similarly and a representation is \textit{graded-irreducible} if there are no non-trivial proper graded-subrepresentations. 
Not all representations of $L$ are graded. For example, the Chen representation $F_p^\lambda$ where $p = c^\infty$ is rational cannot be endowed with such a $\Z$-grading structure (this is because $p \cdot c = \lambda p$ and $\vert c \vert > 0$). 
Irreducibility implies graded-irreducibility but the converse does not hold in general.

\subsubsection{Graded-irreducible representations.}
We now present modules appearing in \cite{Hazrat-Rangaswamy16,Vas23} using an alternative definition.
Fix a graph $\Gamma$ containing a cycle $c$ without any exits - there is no edge which does not lie on $c$ but its source vertex is in $c$. Let $\nu$ be the vertex based on $c$ so that $c = c_1\dots c_n$ and $s(c_1) = \nu$. Define $N_{\nu c}$ to be the $\bk$-vector space spanned by the set:
\[B = \{pq^* \in L : p,q \in E^*,\ r(q^*) = \nu\}.\]
Note, since $c$ does not have any exits the path $q$ must be lying in $c$ and $(pq^*)^*(pq^*) = \nu$.
An $L$-action is defined on $N_{\nu c}$ similarly to the Chen representations:
\[pq^*\cdot e := e^*pq^*,\ pq^*\cdot e^* := epq^*.\]
There is a natural grading structure $\deg$ on $N_{\nu c}$ given by $\deg(pq^*) := \deg(p) - \deg(q)$ for $pq^* \in N_{\nu c}$. Since this is compatible with the grading structure on $L$ it is immediate that $N_{\nu c}$ is a graded representation of $L$. 

The $N_{\nu c}$ are reducible but are graded-irreducible.
To show that $N_{\nu c}$ is graded-irreducible (we will show that it is not irreducible below) consider any homogeneous vector $pq^* \in N_{\nu c}$. Acting by $pq^*$ on the right we can assume the vector is $\nu$. However, $\nu$ is a cyclic vector in $N_{\nu c}$ since $\nu \cdot (qp^*) = pq^* \in B$ for any $p,q \in E^*$ with $r(q^*) = \nu$. This shows that $N_{\nu c}$ does not have any non-trivial proper graded-subrepresentations.

\subsubsection{$N_{\nu c}$ arising from $A$-modules.}
Define $G_{\nu c}$ to be the $\bk$-vector space spanned by
\[B := \{q^* : q \in E^*,\ r(q^*) = \nu\}\]
which forms a complete submodule of $\Lambda(N_{\nu c})$. Observe there is a natural $\N$ grading $\vert \cdot \vert_A$ on $G_{\nu c}$ which lifts to the $\Z$ grading on $N_{\nu c}$.
 
Consider the vector $\nu + c^* \in G_{\nu c}$. For any path $p$ of length greater than $0$ and $s(p) = \nu$, we have that $\vert \nu \cdot p \vert_A > \vert \nu \vert_A$ and $\vert c^* \cdot p \vert_A > \vert c^* \vert_A$. Hence, we deduce that $\nu$ is not in the $A$-submodule generated by $\nu+c^*$, and this submodule is not complete (e.g. consider $\nu \in G_{\nu c}$).
Hence, by our results this reproves $N_{\nu c}$ is reducible.
Moreover, define the set $B_k := \{r(c^*)^k : r \in B\} \subset B$ and vector spaces $G_{\nu c}^k := \bk[B_k]$ for $k \in \N$. These form a descending chain of complete $A$-submodules of $\Lambda(N_{\nu c})$ with trivial intersection:
\[G_{\nu c} = G_{\nu c}^0 \supset G_{\nu c}^1 \supset G_{\nu c}^2 \supset \dots\]
which shows that $N_{\nu c} \notin \cS$.

\subsubsection{Generalising $N_{vc}$ for arbitrary cycles.}
Taking inspiration from the above construction, 
and using the techniques we have developed in this paper, we extend $N_{\nu c}$ to be defined for \emph{arbitrary} cycles (which may contain exits) to obtain a large family of representations of $L$ that are graded-irreducible but not irreducible.

Let $\Gamma$ be any graph containing a cycle $c$ based at the vertex $\nu$. Define $G_{\nu c}$ to be the $\bk$-vector space spanned by the below set:
\[\{q^* : q \textrm { is a path lying in the cycle } c,\ r(q^*) = \nu\}.\]
Define an $A$-action on $G_{\nu c}$ by $q^* \cdot e = e^*q^*$ if $e$ is in $c$, otherwise $q^* \cdot e = 0$. 
A similar argument as above shows the resulting $L$-module $\Pi(G_{\nu c})$ is graded-irreducible but reducible.

Similarly to the Chen modules of \cite{Chen15}, the modules $N_{\nu c}$ correspond to certain atomic representations of $\cO_n$ in \cite{DHJ-atomic15} which are also reducible. The restriction of these atomic representations to Thompson's group $V$ were not studied in \cite{Brothier-Wijesena24a}. However, it can be shown that their restrictions are equivalent to monomial representations associated to the subgroups $\widehat V_p := \{g \in V: g(p) = p,\ g'(p) = 1\}$.

\subsection{Simple (generalised) Chen modules in the base case}\label{sec:Chen-module-few}
Fix $n\geq 2$ and assume that $\Ga$ is a bouquet of $n$ loops.
We are interested in simple Chen modules of $L$ and their generalisations.
The graph $\Ga$ has no sinks hence we do not have any $N_\omega$.
The Chen modules that remains are $F_q^\lambda$ with $q=c^\infty, \lambda\in \bk^\times$ and $F_p$ with $p$ irrational.
We have $\dim_A(F_q^\lambda)=|c|$ and $\dim_A(F_p)=\varepsilon$ ($F_p\notin\cS$).
For the former case, $c$ is a prime word in the edges $e_1,\dots,e_n$ and $F_q^\lambda\simeq F_r^\mu$ if and only if $\lambda=\mu$ and if $r=c_0^\infty$ where $c_0$ is equal to $c$ up to rotation.
Write $W_d$ for the (finite) set of prime words of length $d$ mod out by rotations.
We deduce that the classes of Chen modules of $A$-dimension $d$ is parameterised by $W_d\times \bk^\times$ and form a one-dimensional sub-variety of $\Irr(L)_d$.

Let us turn to generalisation of Chen modules.
Since $\Ga$ is a bouquet of loops it has no exclusive cycles and no infinite emitters, and thus no modules $S_{\omega\infty}$, $F^P_q$ from the works of Ara and Rangaswamy.
Though, we do have the generalisations $F^P_q$ for $q = c^\infty$ from the work of Ánh and Nam \cite{Anh-Nam21} (when $\bk$ is not algebraically closed). Let $\mathbb{IP}_{b}$ denote the set of monic irreducible polynomials (over $\bk$) of degree $b$. The classes of these modules of $A$-dimension $db$ is parameterised by $W_d \times \mathbb{IP}_{b}$. These coincide with the Chen modules when $b=1$.
Finally, the graded-modules of \cite{Hazrat-Rangaswamy16, Vas23} are reducible.

To summarise: simple generalised Chen modules are the $F_p$, $F_q^\lambda$ (with $q=c^\infty$) and the $F_q^P$ of \'Anh--Nam having $A$-dimension $\varepsilon,|c|,|c|\cdot \deg(P)$, respectively.
Fix $d\geq 1$ finite and consider those with $A$-dimension $d$.
The irreducible classes $\Irr(L)_d$ forms a smooth variety of dimension $(n-1)d^2+1$ when $\bk$ is algebraically closed of characteristic one.
The Chen modules $F_q^\lambda$ with $q=c^\infty, |c|=d$ form a subvariety of dimension $1$ in that case and there are no other generalised Chen modules of this dimension.
Now, if $\bk$ is not algebraically closed we additionally have some \'Anh--Nam modules $F_q^P.$
For each decomposition $d=ab$ into natural numbers we have a family of modules $F^P_q$ with $q=c^\infty, |c|=a$ and $P$ monic irreducible with degree $b$.
These classes are all parameterised by at most $d$ coefficients while $\Irr(L)_d$ remains parameterised by $(n-1)d^2+1$ coefficients.

Even in $A$-dimension equal to $1$ we get a profusion of new simple $L$-modules from our techniques. Indeed, any vector $v\in \bk^n$ with at least two nonzero entries defines an $A$-module $V$ so that $W:=\Pi(V)$ is a simple $L$-module that is easily described by decorated trees and is not isomorphic to any previously constructed $L$-module.

\section{Comparison with the work of Ko\c{c} and Özaydın} 
In this section we continue to assume $\Gamma$ is row-finite. 
As before $A,L$ are the associated quiver algebra and Leavitt path algebra. 

\subsection{Work of Ko\c{c} and Özaydın}
Ko\c{c} and Özaydın in \cite{Koc-Ozaydin20} made the key observation that $\Rep(L)$ is equivalent to the full subcategory of $\Rep(A)$ consisting of $A$-modules satisfying the so-called \textit{isomorphism condition}:
\[\bigoplus_{e \in E_\nu^1}\pi(e) : V_\nu \rightarrow \bigoplus_{e\in E_\nu^1} V_{re} \textrm{ is an isomorphism} \tag{I}.\]
This condition means that each caret map is an isomorphism.
The equivalence of Ko\c{c}-Özaydın is obtained via the same forgetful functor $\Lambda : \Rep(L) \rightarrow \Rep(A)$ as we have defined in Section \ref{sec:path-algebra}. Hence, the isomorphism condition (I) precisely describes the $A$-modules in the image of $\Lambda$. Subsequently, given an $A$-module $V$ satisfying (I) one obtains an $L$-module structure on the same vectors space $V$ (which will be equivalent to $\Pi(V)$). 

The authors used the above correspondence to re-construct the Chen modules $F^1_q, F_p, N_v$ of \cite{Chen15} using $A$-modules (see the examples in \cite[Section 5]{Koc-Ozaydin20}). 
Then they obtained some classification results in \cite{Koc-Ozaydin18} and \cite{Koc-Ozaydin22}: in the first for finite-dimensional $L$-modules, and in the second they classified all irreducible $L$-modules (showing they are all Chen modules or generalisations of them) for specific $L$.

Returning back to \cite{Koc-Ozaydin20}, while the authors initially consider $A$-modules satisfying condition (I), they also studied the functor $\Psi : \Rep(A) \rightarrow \Rep(L)$ given by
\[V \mapsto V \otimes_{A} L,\ f \mapsto f \otimes_A \id_L.\]
This functor is in fact naturally isomorphic to the functor $\Pi$ in the present paper. The objects $\Psi(V)$, $\Pi(V)$ are isomorphic by:
\[\theta: \Psi(V) \rightarrow \Pi(V),\ x \otimes_A p^* \mapsto [p,x] \textrm{ for all } x \in V,\ p \in E^*.\] 
The authors also showed how $\Psi(V)$ can be constructed as a direct limit which is similar to the construction of $\Pi(V)$ in the present paper. The authors then considered the full subcategory $\Rep_{\ker}(A)$ of $\Rep(A)$ consisting of all objects $V$ such that $\Psi(V) \simeq \{0\}$ (note, this is stronger than being degenerate). They showed that $\Rep_{\ker}(A)$ is a Serre subcategory of $\Rep(A)$ and thus one can consider the quotient category $\Rep(A)/\Rep_{\ker}(A)$. They then concluded the paper by showing this quotient category is equivalent to $\Rep(L)$. 

\subsection{Comparison to present paper}
Ko\c{c}--Özaydın's realised an equivalence between $\Rep(A)/\Rep_{\ker}(A)$ and $\Rep(L)$ via an explicit construction. 
This is a beautiful theoretical result that they mainly use to classify $L$-modules $W$ in the cases:
\begin{itemize}
    \item $W$ is finite-dimensional; or
    \item $W$ is irreducible but $L$ has few representations (the main examples being $L$ with Gelfand--Kirillov dimension smaller than 4, see \cite{Gelfand-Kirillov66}). 
\end{itemize}
In the latter case $W$ is always a (generalised) Chen module.

Our approach is relevant for all row-finite graphs and focus on $A$-modules with the introduction of the key properties of fullness,  degeneracy and the notion of $A$-dimension.
We provide an explicit equivalence of categories between full subcategories of $\Rep(A)$ and $\Rep(L)$ giving bijection of hom-spaces and moreover preserving irreducibility and indecomposability. This permits to reduce the study of representations of $L$ to the study of representations of $A$ in an effective manner.
Our base example is the Leavitt algebra ($\Ga$ is a bouquet of loops)\,---\, that has no finite-dimensional $L$-modules and has infinite Gelfand--Kirilov dimension. 
It has many irreducible classes of $L$-modules of finite $A$-dimensions, and the Chen modules (and their generalisation) form a tiny piece of their representation theory, see Section \ref{sec:Chen-module-few}. 

\newcommand{\etalchar}[1]{$^{#1}$}

\end{document}